\documentclass[11pt,a4paper]{article}
\usepackage{lipsum}
\usepackage{amsfonts,amsthm,amsmath}
\usepackage{graphicx}
\usepackage{epstopdf}
\usepackage{algorithmic}
\usepackage{mathrsfs}
\usepackage{mathtools}
\usepackage{framed}
\usepackage{url}
\usepackage{fullpage,paralist,hyperref}
\usepackage{comment}
\usepackage{mathpazo}

\usepackage{xcolor}
\hypersetup{
    colorlinks,
    linkcolor={red!50!black},
    citecolor={blue!50!black},
    urlcolor={blue!80!black}
}

\theoremstyle{Definition}
\newtheorem{Definition}{Definition}[section]

\theoremstyle{plain}
\newtheorem{lemma}[Definition]{Lemma}
\newtheorem{theorem}[Definition]{Theorem}

\numberwithin{equation}{section}

\DeclarePairedDelimiter\floor{\lfloor}{\rfloor}

\newcommand\given[1][]{\:#1\vert\:}

\begin{document}

\title{\Large Asymptotic Enumeration of Graph Classes with Many Components}
\author{{\scshape Konstantinos Panagiotou}\thanks{E-Mail: kpanagio@math.lmu.de.} \\
\and
{\scshape Leon Ramzews}\thanks{E-Mail: ramzews@math.lmu.de.}}
\date{Department of Mathematics \\ Ludwigs-Maximilians-Universit\"at M\"unchen }

\maketitle

\begin{abstract} 
\small\baselineskip=9pt We consider graph classes $\cal G$ in which every graph has components in a class $\mathcal{C}$ of connected graphs. We provide a framework for the asymptotic study of $\lvert\mathcal{G}_{n,N}\rvert$, the number of graphs in $\mathcal{G}$ with $n$ vertices and $N:=\floor*{\lambda n}$ components, where $\lambda\in(0,1)$. Assuming that the number of graphs with $n$ vertices in $\mathcal{C}$ satisfies
\begin{align*}
\lvert \mathcal{C}_n\rvert\sim b n^{-(1+\alpha)}\rho^{-n}n!, \quad n\to \infty
\end{align*}
for some $b,\rho>0$ and $\alpha>1$ -- a property commonly encountered in graph enumeration -- we show that 
\begin{align*}
\lvert\mathcal{G}_{n,N}\rvert\sim c(\lambda) n^{f(\lambda)} (\log n)^{g(\lambda)} \rho^{-n}h(\lambda)^{N}\frac{n!}{N!}, \quad n\to \infty
\end{align*}
for explicitly given $c(\lambda),f(\lambda),g(\lambda)$ and $h(\lambda)$. These functions are piecewise continuous with a discontinuity at a critical value $\lambda^{*}$, which we also determine. The central idea in our approach is to sample objects of $\cal G$ randomly by so-called Boltzmann generators in order to translate enumerative problems to the analysis of iid random variables. By that we are able to exploit local limit theorems and large deviation results well-known from probability theory to prove our claims. The main results are formulated for generic combinatorial classes satisfying the SET-construction.
\end{abstract}

\section{Introduction}
	\newcommand{\cG}{\mathcal{G}}
\newcommand{\cF}{\mathcal{F}}
\newcommand{\cP}{\mathcal{P}}

Let $\cG$ be a class of (labeled) graphs closed under isomorphism, for example the class $\cF$ of forests or the class $\cP$ of planar graphs. Further, let $\cG_{n,k}$ be the class of graphs in $\cG$ that have vertex set $\{1, \dots, n\} =: [n]$ and $k\in [n]$ components. An important question that has a long and rich history is concerned with the enumeration -- exact or asymptotic -- of graphs in $\cG$, for example with the determination of $g_{n,k} := |\cG_{n,k}|$, see \cite{Bodirsky2007,Gimenez2005,Rue2013,panagiotou2016}. We also abbreviate $\cG_n = \cup_{k \ge 1} \cG_{n,k}$ and $g_n := |\cG_n|$.

A particular example is the case of so-called \emph{smooth and addable} classes of graphs. Following~\cite{McDiarmid2006}, a non-empty graph class $\cG$ is called \emph{weakly addable}, if for any $G \in \cG$ the graph obtained by connecting any two vertices in distinct components of $G$ is in $\cG$ as well. If in addition a graph $G$ is  in $\cG$ if and only if each component of $G$ is in $\cG$, then $\cG$ is called \emph{addable}. Finally, we call $\cG$ \emph{smooth} if
\[
	\lim_{n\to\infty} \frac{g_n}{ng_{n-1}}
\]
exists and is finite. Many important classes are smooth and addable, including (but by far not limited to) $\cF$ and $\cP$ mentioned before. For such classes, in~\cite{McDiarmid2006} it was shown that the number of components in a uniformly drawn random graph from $\cG_n$ converges in distribution to  1 + Po($\lambda$), where $\lambda$ is a constant depending on $\cG$ only. From this result, we immediately obtain for $k \in \mathbb{N}$
\begin{equation}
\label{eq:gnm,mbounded}
	g_{n,k} \sim e^{-\lambda}\frac{\lambda^{k-1}}{(k-1)!} g_n
\end{equation}
as $n\to\infty$. For example, in the case of trees it turns out that $\lambda = 1/2$ and $f_n \sim e^{1/2} n^{n-2}$, $n\rightarrow\infty$, see for example the classical work~\cite{Renyi1959}. We obtain for $k \in \mathbb{N}$ the explicit formula
\[
	|\cF_{n,k}| \sim \frac1{2^{k-1} (k-1)!} n^{n-2}
\]
as $n\to\infty$. Note that the domain of applicability of \eqref{eq:gnm,mbounded} is limited: we first have to fix the number of components $k$, and then let the number of vertices grow large. In particular, in the case where the number of components is large, for example proportional to the number of vertices, much less is known. An important exception is the case of forests, where the following quite detailed result exposing two phase transitions in the behavior is known, see e.g.~\cite{Flajolet2009,kolchin}. Unless a base is given explicitly, logarithms are always to base $\mathrm{e}$.
\begin{theorem}
\label{main:thm:forests}
Let $N:=\floor*{\lambda n}$, where $\lambda\in(0,1)$. Then
\begin{align*}
\frac{N!}{n!}\lvert\cF_{n,N}\rvert\sim
\begin{cases}
c_{-}(\lambda)n^{-3/2}\mathrm{e}^n2^{-N} ,&\lambda\in(0,\frac12) \\
c n^{-2/3}\mathrm{e}^{n}2^{-N} ,&\lambda=\frac12 \\
c_{+}(\lambda)n^{-1/2}{x_\lambda}^{-n}T(x_\lambda)^N ,&\lambda\in(\frac12,1)
\end{cases},
\end{align*}
with
\begin{align*}
c_{-}(\lambda)=\sqrt{\frac{2}{\pi}}\frac{\lambda}{(1-2\lambda)^{5/2}},\quad c={3^{-1/3}\Gamma(1/3)^{-1}}\quad\text{and}\quad c_{+}(\lambda)=\sqrt{\frac{\lambda(2\lambda-1)}{2\pi(1-\lambda)}}.
\end{align*}
Further,
\begin{align*}
x_\lambda = 2(1-\lambda)\mathrm{e}^{-2(1-\lambda)}\quad\text{and}\quad T(x_\lambda)=2\lambda (1-\lambda).
\end{align*}
\end{theorem}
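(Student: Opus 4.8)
The plan is to sample a uniformly random $N$-component forest by a Boltzmann generator and reduce the enumeration to a local estimate for a sum of iid random variables. Let $\tau(x)=x\mathrm{e}^{\tau(x)}$ be the rooted tree function, with radius of convergence $\rho=1/\mathrm{e}$ and $\tau(\rho)=1$, so that the exponential generating function of unrooted labelled trees is $T(x)=\tau(x)-\tfrac12\tau(x)^2=\sum_{k\ge1}k^{k-2}x^k/k!$ and $xT'(x)=\tau(x)$. Since a forest is a set of trees, marking components by $u$ gives $\sum_{n,N}|\mathcal F_{n,N}|u^Nx^n/n!=\exp(uT(x))$, whence $\tfrac{N!}{n!}|\mathcal F_{n,N}|=[x^n]T(x)^N$. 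For $x\in(0,\rho]$ let $X,X_1,X_2,\dots$ be iid with $\Pr(X=k)=k^{k-2}x^k/(k!\,T(x))$ and $S_N=X_1+\dots+X_N$; expanding $T(x)^N$ yields
\begin{equation*}
\tfrac{N!}{n!}\,|\mathcal F_{n,N}|=\frac{T(x)^N}{x^n}\,\Pr(S_N=n),
\end{equation*}
valid for every such $x$. The mean $E_x[X]=xT'(x)/T(x)=2/(2-\tau(x))$ increases from $1$ to $2$ as $x$ ranges over $(0,\rho]$, so the equation $E_x[X]=1/\lambda$ has an interior solution $x_\lambda<\rho$ precisely when $\lambda>1/2$; there $\tau(x_\lambda)=2(1-\lambda)$, giving $x_\lambda=2(1-\lambda)\mathrm{e}^{-2(1-\lambda)}$ and $T(x_\lambda)=2\lambda(1-\lambda)$. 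Whether the relevant parameter is interior or sits at the boundary $\rho$ is exactly what produces the transition at $\lambda^\ast=1/2$.

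For $\lambda\in(1/2,1)$ I would fix $x=x_\lambda<\rho$. Then $X$ has exponentially small tails and finite variance, which a short computation from $xT'=\tau$ and $\tau=x\mathrm{e}^\tau$ evaluates as $\sigma^2=2\tau/((1-\tau)(2-\tau)^2)=(1-\lambda)/((2\lambda-1)\lambda^2)$ at $\tau=2(1-\lambda)$; moreover $X$ is aperiodic, its support being all of $\{1,2,3,\dots\}$. Since $N=\lfloor\lambda n\rfloor$ forces $n-NE_{x_\lambda}[X]=n-N/\lambda=O(1)$, the classical local central limit theorem gives $\Pr(S_N=n)\sim(2\pi N\sigma^2)^{-1/2}\exp(-(n-N/\lambda)^2/(2N\sigma^2))$, and the exponential factor tends to $1$ because $(n-N/\lambda)^2/N=O(1/n)$. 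Inserting this and $N\sim\lambda n$ into the displayed identity produces $n^{-1/2}x_\lambda^{-n}T(x_\lambda)^N$ with constant $(2\pi\lambda\sigma^2)^{-1/2}=\sqrt{\lambda(2\lambda-1)/(2\pi(1-\lambda))}=c_+(\lambda)$.

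At the threshold $\lambda=1/2$ the saddle sits at the boundary $x=\rho$, where $E_\rho[X]=2=1/\lambda$ but the variance is infinite: from $k^{k-2}/k!\sim b\,k^{-5/2}\rho^{-k}$ with $b=1/\sqrt{2\pi}$ and $T(\rho)=1/2$ one gets $\Pr(X=k)\sim 2b\,k^{-5/2}$, placing $X$ in the domain of attraction of the spectrally positive stable law of index $\alpha=3/2$. The singular expansion $T(x)=\tfrac12-\delta+\tfrac{2\sqrt2}{3}\delta^{3/2}+O(\delta^2)$, $\delta=1-x/\rho$, yields $\log E_\rho[\mathrm{e}^{i\theta(X-2)}]\sim-\tfrac43(1+i)\theta^{3/2}$ as $\theta\downarrow0$. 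As $n-2N=O(1)$ at the centre, Fourier inversion on the lattice with the scaling $\theta=N^{-2/3}s$ gives a stable local limit theorem $\Pr(S_N=n)\sim N^{-2/3}p_{3/2}(0)$, where $p_{3/2}(0)=\tfrac1{2\pi}\int_{\mathbb R}\exp(-\tfrac43|s|^{3/2}(1+i\,\mathrm{sgn}\,s))\,ds=\tfrac1{2\pi}2^{-2/3}3^{1/6}\Gamma(2/3)$. Using $N^{-2/3}\sim2^{2/3}n^{-2/3}$ and the reflection formula $\Gamma(1/3)\Gamma(2/3)=2\pi/\sqrt3$ collapses the prefactor to $c=3^{-1/3}\Gamma(1/3)^{-1}$, as claimed.

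Finally, for $\lambda\in(0,1/2)$ one keeps $x=\rho$, but now $n/N\to1/\lambda>2=E_\rho[X]$, so $\{S_N=n\}$ is a large-deviation event for a sum of subexponential summands. Here the governing mechanism is the single big jump: one component is macroscopic, of size $M=n-2(N-1)\sim(1-2\lambda)n$, while the other $N-1$ components remain near their mean. I would make this rigorous as a local large-deviation estimate $\Pr(S_N=n)\sim N\,\Pr(X=M)\sim 2bN M^{-5/2}$; with $N\sim\lambda n$, $M\sim(1-2\lambda)n$ and $2b=\sqrt{2/\pi}$ this is $\sqrt{2/\pi}\,\lambda(1-2\lambda)^{-5/2}n^{-3/2}$, and the displayed identity then gives $\tfrac{N!}{n!}|\mathcal F_{n,N}|\sim c_-(\lambda)n^{-3/2}\mathrm{e}^n2^{-N}$ with $c_-(\lambda)=\sqrt{2/\pi}\,\lambda/(1-2\lambda)^{5/2}$. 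The hard part is precisely this step: upgrading the heuristic one-big-jump picture to a \emph{local} estimate uniform in the joint limit $n,N\to\infty$, i.e.\ excluding two or more large components while simultaneously controlling the bulk $N-1$ summands by the local CLT; the companion boundary analysis at $\lambda=1/2$, where the ordinary local CLT must be replaced by its stable counterpart, is of comparable delicacy, whereas the interior regime $\lambda>1/2$ is routine once the identity is in place.
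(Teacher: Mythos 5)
Your proposal is correct and follows essentially the same route as the paper: the identity $\frac{N!}{n!}|\mathcal F_{n,N}|=\frac{T(x)^N}{x^n}\Pr(S_N=n)$ is the paper's Lemma \ref{prelim:lem:sumiid} specialised to trees, and the three regimes are handled by the same trio of tools (local CLT at the interior saddle $x_\lambda$ for $\lambda>1/2$, a $3/2$-stable local limit theorem at $x=\rho$ for $\lambda=1/2$, and a one-big-jump local large-deviation estimate for $\lambda<1/2$), with all constants checking out. The step you flag as the hard part is exactly the content of the Doney-type result the paper cites as Lemma \ref{prelim:lem:largedevi}, so nothing further is missing.
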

Note that we correct several typos and computational errors in previous presentations of this result:\ for example, in the case $\lambda=1/2$ the constant factor given in \cite{Flajolet2009} is incorrect, see also \cite{janson2012}. Further, in the case $\lambda\in(0,{1}/{2})$ we obtain the characteristic exponent ${3}/{2}$ instead of ${1}/{2}$ and we correct the absence of the factor $n$ in the exponents. In addition, the normalizing factor is ${N!}/{n!}$ instead of ${1}/{n!}$.

The main contribution of this paper is to generalize Theorem~\ref{main:thm:forests} to a broader setting by applying probabilistic methods. Our primary domain of interest lies in the study of several families of graphs, like the class of planar graphs, but it is not limited to that: in what follows, we consider general combinatorial classes comprised of objects having finite size, and formulate our main results (Theorems~\ref{main:bigthm:1}, \ref{main:bigthm:2} and~\ref{main:bigthm:3}) that apply in the case where the counting sequence has a specific asymptotic property. In particular, depending on the subexponential growth of the counting sequence, we find out that a variety of asymptotic behaviors with different numbers of phase transitions  becomes apparent.

It should be mentioned that previous work has been done on this topic in the past; more particularly, theorems estimating large powers of generating functions were set up. Denote by $\mathcal{G}$ a class of (labeled) graphs and by $\mathcal{C}$ the subclass of connected graphs in $\mathcal{G}$. Let $C(x)$ be the exponential generating function of $\mathcal{C}$. Then the problem of finding the behavior of $g_{n,N}$ for large $n$ and $N:=\floor{\lambda n}$, $\lambda\in(0,1)$, can be reduced to deriving the coefficient of $x^n$ in $C(x)^{N}$, i.e. it can be easily verified that
\begin{align}
\label{introduction:eq:largepowerofgenfunc}
g_{n,N}=\frac{n!}{N!}[x^n]C(x)^{N}.
\end{align}
By imposing some (mild) hypotheses on $C(x)$, see for example \cite[Theorem VIII.8]{Flajolet2009} and references therein, the right-hand side in \eqref{introduction:eq:largepowerofgenfunc} can be evaluated asymptotically for $\lambda\in(\lambda^{*},1)$, where $\lambda^{*}$ is a critical value given in \eqref{main:eq:lambdastar}. Further, \cite[Theorem IX.16]{Flajolet2009} can be applied to the case $\lambda=\lambda^{*}$, whereas the assumptions on $C(x)$ given there are somewhat stricter than ours -- we only need the coefficients of $C(x)$ to be in a certain asymptotic regime. In general, our proofs are within the scope of a combinatorial and probabilistic setting as opposed to the analytic proofs in \cite{Flajolet2009}.

The paper is organized as follows. The general setting and our main results are presented in Section \ref{sec:main}. Auxiliary results such as properties of the \textit{Boltzmann model}, which plays a central role in our proofs, and limit theorems are presented in Section \ref{sec:prelim}. Indeed, our proof is similar in spirit to Kolchin's proof~\cite{kolchin} of Theorem~\ref{main:thm:forests}, where the counting problem is reduced to the problem of determining the probability that the sum of iid random variables equals a specific value. However, the setting considered here is more general, and we provide a systematic reduction through the aforementioned Boltzmann model. The proof of the main result is presented in Section \ref{sec:proof}, and examples are given in Section \ref{sec:exa}.


\section{Main Result}
	\label{sec:main}
	In order to formulate our main result we first need to introduce the notion of \textit{combinatorial species}, which include as specific examples classes of (labeled) graphs. We give only a concise introduction tailored to our specific application, and we refer to \cite{Leroux1998} for a detailed discussion and many examples, and to \cite{Flajolet2009} for the development of the equivalent language of \textit{combinatorial classes}.

A \textit{combinatorial species} is defined as a family of mappings $\mathcal{F}$ that maps any finite set $U$ (the ``labels'') to a finite set $\mathcal{F}[U]$ of $\mathcal{F}$-objects and any bijection $\sigma:U\rightarrow V$ to a bijective transport function $\mathcal{F}[\sigma]:\mathcal{F}[U]\rightarrow\mathcal{F}[V]$, such that the following properties are satisfied.
\begin{itemize}
\item For all bijections $\sigma:U\rightarrow V$, $\sigma ':V\rightarrow W$: $\mathcal{F}[\sigma '\circ\sigma]=\mathcal{F}[\sigma ']\circ\mathcal{F}[\sigma]$ and
\item let $\mathrm{id}_U:U\rightarrow U$ denote the identity map. Then $\mathcal{F}[\mathrm{id}_U]=\mathrm{id}_{\mathcal{F}[U]}$ for all finite sets $U$.
\end{itemize}
A basic example is the ``species of all graphs'': it maps each finite set $U$ to the set of all graphs with vertex set $U$, and each bijection $\sigma:U\rightarrow V$ naturally induces a bijection from the set of graphs with vertex set $U$ to the set of graphs with vertex set $V$.

We will need a bit more notation. Let $\mathcal{F}$ and $\mathcal{G}$ be species. We write $\mathcal{G}\subset\mathcal{F}$ and say that $\mathcal{G}$ is a \emph{subspecies} of $\mathcal{F}$, if $\mathcal{G}[U]\subset\mathcal{F}[U]$ for all finite $U$ and $\mathcal{G}[\sigma]=\mathcal{F}[\sigma]|_{\mathcal{G}[U]}$ for all bijections $\sigma:U\rightarrow V$. An example is the ``species of trees'' -- connected and acyclic graphs -- as a subspecies of the species of all graphs. 

Let $\mathcal{F}$ be a species. We say that $\gamma\in\mathcal{F}[U]$ has \emph{size} $\lvert\gamma\rvert:=\lvert U\rvert$ and $\gamma$ and $\gamma '$ are termed \emph{isomorphic} if there is a bijection $\sigma:U\rightarrow V$ such that $\mathcal{F}[\sigma](\gamma)=\gamma '$. For $n\in\mathbb{N}_0$ we let $\mathcal{F}_n:=\mathcal{F}[\{1,\dots,n\}]$ and by slight abuse of notation we will often identify $\mathcal{F}$ with $\bigcup_{n\in\mathbb{N}_0}\mathcal{F}_n$. The \textit{exponential generating series} of $\mathcal{F}$ is the formal power series 
\begin{align*}
F(x)=\sum\limits_{n\geq 0}\lvert\mathcal{F}_n\rvert\frac{x^n}{n!}.
\end{align*}
Note that $F$ may have radius of convergence zero. If this is not the case, we say that $\mathcal{F}$ is \textit{analytic}, and we call $F$ its \textit{exponential generating function} (egf). 
The framework of combinatorial species offers a whole bunch of \textit{constructions} that enable us to create new species from others, and which relate the corresponding generating series; these constructions appear frequently in modern theories of combinatorial analysis and in systematic approaches to random generation of combinatorial objects. For our needs, it suffices to present the \textit{set species} \text{{\scshape SET}} and, given two species $\mathcal{F}$ and $\mathcal{G}$, the \textit{substitution} $\mathcal{F}\circ\mathcal{G}$.
\begin{itemize}
\item $\text{{\scshape SET}}[U]=\{U\}$ for all finite sets $U$. The egf is given by
\begin{align*}
\sum\limits_{n\geq0}\frac{x^n}{n!}=\mathrm{e}^x.
\end{align*}
\item If the species $\mathcal{G}$ has no objects of size zero, then for all finite sets $U$
\begin{align*}
(\mathcal{F}\circ\mathcal{G})[U]=\bigcup\limits_{\pi\text{ partition of }U}\mathcal{F}[\pi]\times\prod\limits_{P\in\pi}\mathcal{G}[P].
\end{align*}
\end{itemize}

We can interpret an object in $\mathcal{F}\circ\mathcal{G}$ as an $\mathcal{F}$-object whose labels are substituted by objects from $\mathcal{G}$. The transport along a bijection $\sigma:U\rightarrow V$ is defined by applying the induced map $\sigma ':\pi\rightarrow\pi ':=\{\sigma(P):P\in\pi\}$ to the $\mathcal{F}$-object and the maps $\sigma|_P$, $P\in\pi$, to the (corresponding) objects in $\mathcal{G}$. Again, the notation for the substitution is suggestive: from the definition it also follows that the exponential generating series for $\mathcal{F}\circ\mathcal{G}$ equals $(F\circ G)(x)=F(G(x))$.

We are interested in the asymptotic enumeration of graph classes, where the graphs have ``many'' connected components. In terms of the theory of species this means that we consider a species $\mathcal{C}$ (of connected graphs) of which the components are taken from and the superordinate species is $\mathcal{G}:=\text{{\scshape SET}}\circ\mathcal{C}$. Note that the definition of \text{{\scshape SET}} and substitution species imply that any object in $\mathcal{G}_n$ can be equally seen as an unordered sequence of objects in $\mathcal{C}$ relabeled according to a partition $\pi$ of $[n]$. We define for $k \in \mathbb{N}_0$
\begin{align*}
\mathcal{G}_{n,k}=\bigcup\limits_{\pi\text{ is a $k$-partition of }[n]}\text{{\scshape SET}}[\pi]\times\prod\limits_{P\in\pi}\mathcal{C}[P]
\end{align*}
as the set of graphs having $k$ components in $\mathcal{C}$ and $n$ nodes. For an object $G$ in $\mathcal{G}_{n,k}$, we denote the number of components by $\kappa(G)=k$. In what follows, we derive the asymptotic behavior of $\lvert\mathcal{G}_{n,N}\rvert$ with $N:=\floor*{\lambda n}$ for $\lambda\in(0,1)$. 

Up to this point we have specified what classes of combinatorial objects we consider. We shall make an additional crucial assumption in the rest of this paper, namely that 
\begin{align}
\label{eq:heavyAssumption}
\mathcal{C}_1 \neq \emptyset \quad\text{and}\quad \lvert\mathcal{C}_n\rvert\sim bn^{-(1+\alpha)}\rho^{-n}n!,\quad n\rightarrow\infty, 
\end{align}
for some $b,\rho>0$ and $\alpha > 1$.
This asymptotic behavior is in general rather restrictive. However, for our intended applications it is not, as the asymptotic counting sequence of many relevant classes of graphs (trees, families of planar graphs, block-stable classes, \dots) has the above properties, see for example ~\cite{Rue2013} or \cite{Panagiotou2010}. In particular, for the case of so-called block-stable classes of graphs we always have $\alpha > 1$ and $\rho < 1$, as stated in~\cite{panagiotou2016}.

Before we proceed with presenting our results we need a technical preparation. The assumptions on $\lvert\mathcal{C}_n\rvert$ yield that $C(x)$ and $C'(x)$ are finite for all $x\in(0,\rho]$. Hence,
\begin{align}
\label{main:eq:lambdastar}
\lambda^{*}:=\frac{C(\rho)}{\rho C'(\rho)}
\end{align}
is well defined. The following simple lemma is also immediate, since the coefficients of $C(x)$ are non-negative.
\begin{lemma}\label{main:lem:uniquesolution}
For every $\lambda\in[\lambda^{*},1)$ there exists a unique $x_\lambda\in(0,\rho]$ such that
\begin{align}
\label{main:eq:uniquesolution}
\frac{x_\lambda C'(x_\lambda)}{C(x_\lambda)}=\frac{1}{\lambda}.
\end{align}
\end{lemma}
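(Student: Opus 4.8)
The plan is to study the function $\psi(x) := xC'(x)/C(x)$ on the interval $(0,\rho]$ and show that it takes the value $1/\lambda$ exactly once. First I would observe that $C(x) = \sum_{n\ge 1} \lvert\mathcal{C}_n\rvert x^n/n!$ has only non-negative coefficients, and since $\mathcal{C}_1\neq\emptyset$ we have $C(x)>0$ and $C'(x)>0$ for all $x\in(0,\rho]$; moreover the assumption~\eqref{eq:heavyAssumption} guarantees that both $C(\rho)$ and $C'(\rho)$ are finite (the exponent $1+\alpha>2$ makes the defining series for $C'$ converge at $x=\rho$). Hence $\psi$ is well defined, positive, and continuous on $(0,\rho]$, and in particular $\psi(\rho) = \rho C'(\rho)/C(\rho) = 1/\lambda^{*}$.

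The key step is to prove that $\psi$ is strictly increasing on $(0,\rho]$. This is a standard fact: $\psi(x) = x\,(\log C(x))'$, and a short computation gives
\begin{align*}
\psi'(x) = \frac{C'(x)}{C(x)} + x\,\frac{C''(x)C(x) - C'(x)^2}{C(x)^2}.
\end{align*}
The sign of $\psi'$ is governed by the quantity $C(x)C'(x) + x\bigl(C''(x)C(x)-C'(x)^2\bigr)$, and the cleanest way to see positivity is the probabilistic interpretation: if $X$ is the random variable on $\mathbb{N}$ with $\Pr[X=n]\propto \lvert\mathcal{C}_n\rvert x^n/n!$ (the Boltzmann distribution at parameter $x$ associated with $\mathcal{C}$), then $\psi(x) = \mathbb{E}[X]$ and $x\psi'(x) = \mathrm{Var}[X] \ge 0$. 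Since $\mathcal{C}_1\neq\emptyset$ and $\mathcal{C}$ contains objects of at least one other size (else $C$ is linear and $\psi\equiv 1$, a degenerate case one checks separately), the variance is strictly positive, so $\psi$ is strictly increasing.

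With monotonicity in hand the lemma follows by the intermediate value theorem. I would compute the limit as $x\downarrow 0$: because $\mathcal{C}_1\neq\emptyset$ the lowest-order term dominates, giving $\psi(x)\to 1$ as $x\to 0^{+}$, so $\psi$ maps $(0,\rho]$ continuously and strictly monotonically onto the interval $(1, 1/\lambda^{*}\,]$. For any $\lambda\in[\lambda^{*},1)$ we have $1/\lambda\in(1, 1/\lambda^{*}\,]$, which lies in the range of $\psi$; strict monotonicity then yields a \emph{unique} $x_\lambda\in(0,\rho]$ with $\psi(x_\lambda)=1/\lambda$, exactly equation~\eqref{main:eq:uniquesolution}. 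The only real obstacle is establishing strict monotonicity rigorously and ruling out the degenerate linear case; once the variance interpretation is set up this is routine, but care is needed to confirm that the convexity inequality $C''C \ge (C')^2$ does not hold with equality identically, which again reduces to $\mathcal{C}$ having objects of more than one size.
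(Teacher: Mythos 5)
Your proposal is correct and follows essentially the same route as the paper: monotonicity of $xC'(x)/C(x)$ from the non-negativity of the coefficients, the boundary values $1$ as $x\to 0^{+}$ (using $\mathcal{C}_1\neq\emptyset$) and $1/\lambda^{*}$ at $x=\rho$, and the intermediate value theorem. Your variance interpretation simply fills in the strict monotonicity that the paper asserts without detail, and the degenerate linear case you worry about is excluded by the standing assumption \eqref{eq:heavyAssumption}, which forces $\mathcal{C}_n\neq\emptyset$ for all large $n$.
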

We also introduce some convenient notation. For $\lambda\in(0,\lambda^{*})$ define
\begin{align*}
c_{-}(\lambda):=\frac{b\lambda}{C(\rho) (1-\lambda/\lambda^{*})^{\alpha+1}}.
\end{align*}
In the case $\lambda=\lambda^{*}$ we will need
\begin{align*}
c:=\left(\frac{\alpha C(\rho)}{\lambda^{*}b\lvert\Gamma(1-\alpha)\rvert}\right)^{1/\alpha}\frac{1}{\lvert\Gamma(-1/\alpha)\rvert}.
\end{align*}
If $\lambda\in[\lambda^{*},1)$, denote by $x_\lambda$ the unique solution to \eqref{main:eq:uniquesolution}. Then set
\begin{align*}
c_{+}(\lambda):=(2\pi\sigma^2_\lambda\lambda)^{-1/2},
\end{align*}
where we abbreviate
\begin{align}
\label{main:eq:sigma}
\sigma^2_\lambda:=\frac{x_\lambda^2C''(x_\lambda)}{C(x_\lambda)}+\frac{1}{\lambda}-\frac{1}{\lambda^2}.
\end{align}
Finally, define
\begin{align*}
c_2:=\sqrt{\frac{C(\rho)}{b\pi\lambda^{*}}}.
\end{align*}
Our main theorems reflect the influence of the exponent $\alpha$ on the asymptotic behavior of $\lvert\mathcal{G}_{n,N}\rvert$. In particular, if $\alpha < 2$, then we have a similar behavior as in the case of forests (where $\alpha = 3/2$), where the critical exponent changes from $\alpha$ to $1/\alpha$ to $1/2$:
\begin{theorem}\label{main:bigthm:1}
Let $1<\alpha<2$. Then
\begin{align*}
\frac{N!}{n!}\lvert\mathcal{G}_{n,N}\rvert\sim
\begin{cases}
c_{-}(\lambda)n^{-\alpha}\rho^{-n}C(\rho)^N ,&\lambda \in (0,\lambda^{*}) \\
c n^{-1/\alpha}\rho^{-n}C(\rho)^N ,&\lambda=\lambda^{*} \\
c_{+}(\lambda)n^{-1/2}x_\lambda^{-n}C(x_\lambda)^N ,&\lambda \in (\lambda^{*},1)
\end{cases}.
\end{align*}
\end{theorem}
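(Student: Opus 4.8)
The starting point is the identity \eqref{introduction:eq:largepowerofgenfunc}, which I would recast probabilistically. For a parameter $x\in(0,\rho]$ let $X=X(x)$ be the integer random variable with
\begin{align*}
\Pr[X=k]=\frac{\lvert\mathcal{C}_k\rvert x^k}{k!\,C(x)},\qquad k\ge 1,
\end{align*}
i.e.\ the size of a Boltzmann-distributed $\mathcal{C}$-object; since $\mathcal{C}$ has no objects of size $0$ this is a genuine probability law on $\mathbb{N}$. Taking $N$ independent copies $X_1,\dots,X_N$ and writing $S_N=X_1+\dots+X_N$, expanding $\Pr[S_N=n]$ as a convolution yields
\begin{align*}
\frac{N!}{n!}\lvert\mathcal{G}_{n,N}\rvert=[x^n]C(x)^N=\frac{C(x)^N}{x^n}\,\Pr[S_N=n].
\end{align*}
Thus the whole problem reduces to estimating the point probability $\Pr[S_N=n]$ for a well-chosen tilting parameter $x$. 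The natural choice makes the target $n\approx N/\lambda$ the typical value of $S_N$: since $\mathbb{E}[X]=xC'(x)/C(x)$, I would impose $\mathbb{E}[X]=1/\lambda$, which is exactly \eqref{main:eq:uniquesolution}. As $x$ ranges over $(0,\rho]$ the mean $\mathbb{E}[X]$ increases from $1$ to $1/\lambda^{*}$, so by Lemma~\ref{main:lem:uniquesolution} an interior solution $x_\lambda\in(0,\rho)$ exists precisely when $\lambda\in(\lambda^{*},1)$, it degenerates to $x_\lambda=\rho$ at $\lambda=\lambda^{*}$, and for $\lambda\in(0,\lambda^{*})$ no interior solution exists and one must tilt maximally at $x=\rho$. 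This trichotomy is the source of the three regimes.

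For $\lambda\in(\lambda^{*},1)$ I would set $x=x_\lambda<\rho$. Inside the disc of convergence the law of $X$ has exponentially decaying tails, hence finite moments of every order; a short computation identifies $\mathbb{E}[X]=1/\lambda$ and $\operatorname{Var}(X)=\sigma_\lambda^2$ as in \eqref{main:eq:sigma}. Because $N=\floor*{\lambda n}$ the centering error $n-N/\lambda=\{\lambda n\}/\lambda$ is bounded, so $n$ sits at the center of the bulk of $S_N$. Moreover the tail hypothesis \eqref{eq:heavyAssumption} forces $\mathcal{C}_k\neq\emptyset$ for all large $k$, so $X$ is aperiodic (span $1$). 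The classical local central limit theorem then gives $\Pr[S_N=n]\sim(2\pi N\sigma_\lambda^2)^{-1/2}$, and inserting this with $N\sim\lambda n$ into the reduction identity yields the claimed $c_{+}(\lambda)\,n^{-1/2}x_\lambda^{-n}C(x_\lambda)^N$.

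For $\lambda\le\lambda^{*}$ I would tilt at $x=\rho$, where \eqref{eq:heavyAssumption} produces a heavy, regularly varying tail
\begin{align*}
\Pr[X=k]\sim\frac{b}{C(\rho)}\,k^{-(1+\alpha)},\qquad k\to\infty,
\end{align*}
so that $\mathbb{E}[X]=1/\lambda^{*}$ while $\operatorname{Var}(X)=\infty$ for $\alpha<2$. Two sub-cases arise. If $\lambda=\lambda^{*}$ the target $n$ again lies at the center, $n-N/\lambda^{*}=\{\lambda^{*}n\}/\lambda^{*}$ being bounded, but now $X-\mathbb{E}[X]$ lies in the domain of attraction of a spectrally positive $\alpha$-stable law with norming constants $a_N\asymp N^{1/\alpha}$. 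The stable local limit theorem evaluated at the center gives $\Pr[S_N=n]\sim p_\alpha(0)/a_N$, and collecting the scale and the stable density value at $0$ produces the exponent $n^{-1/\alpha}$ and, after identifying the relevant $\Gamma$-factors, the constant $c$. If instead $\lambda\in(0,\lambda^{*})$, then $n-N/\lambda^{*}\sim(1-\lambda/\lambda^{*})n$ is \emph{linear} in $n$, far beyond the $N^{1/\alpha}$ stable window, so $\{S_N=n\}$ is a genuine large-deviation event. For subexponential summands this is governed by the single-big-jump principle in its local form,
\begin{align*}
\Pr[S_N=n]\sim N\,\Pr[X=n-(N-1)/\lambda^{*}],
\end{align*}
one component being macroscopically large while the rest stay typical. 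Substituting the power-law tail, whose argument is $\sim(1-\lambda/\lambda^{*})n$, and using $N\sim\lambda n$ gives $\Pr[S_N=n]\sim c_{-}(\lambda)\,n^{-\alpha}$, whence the first line of the theorem.

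The routine parts are the moment identities, the verification of aperiodicity, and the bookkeeping of the bounded centering errors. The delicate step is the heavy-tailed local analysis at and below $\lambda^{*}$: at $\lambda=\lambda^{*}$ one must pass from the regularly varying point probabilities to the precise stable local limit theorem and track the norming constant and the stable density at $0$ carefully enough to recover the exact constant $c$ with its $\lvert\Gamma(1-\alpha)\rvert$ and $\lvert\Gamma(-1/\alpha)\rvert$ factors; and for $\lambda\in(0,\lambda^{*})$ one needs the local single-big-jump estimate to hold uniformly over the relevant (linearly growing) deviation range. These two heavy-tailed limit theorems, which I would prepare as the probabilistic input of Section~\ref{sec:prelim}, are where the real work lies, the remainder being substitution into the reduction identity.
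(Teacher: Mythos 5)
Your proposal is correct and follows essentially the same route as the paper: the same reduction $\frac{N!}{n!}\lvert\mathcal{G}_{n,N}\rvert=C(x)^{N}x^{-n}\,\mathbb{P}(S_N=n)$ for Boltzmann-distributed summands (the paper's Lemma~\ref{prelim:lem:sumiid}, which you rederive via the convolution identity), the same trichotomy of tilting parameters governed by Lemma~\ref{main:lem:uniquesolution}, and the same three probabilistic inputs — the classical local CLT for $\lambda\in(\lambda^{*},1)$, a stable local limit theorem at $\lambda=\lambda^{*}$, and the local single-big-jump/large-deviation estimate for $\lambda\in(0,\lambda^{*})$ — which are exactly the paper's Lemmas~\ref{prelim:lem:classlocal}, \ref{prelim:lem:generallocallimit}/\ref{prelim:lem:janson} and \ref{prelim:lem:largedevi}.
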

In the case $\alpha = 2$ the behavior is qualitatively similar, however, note the appearance of a logarithmic factor at the critical point:
\begin{theorem}\label{main:bigthm:2}
Let $\alpha=2$. Then
\begin{align*}
\frac{N!}{n!}\lvert\mathcal{G}_{n,N}\rvert\sim
\begin{cases}
c_{-}(\lambda)n^{-2}\rho^{-n}C(\rho)^{N} ,\mkern-18mu&\lambda \in (0,\lambda^{*}) \\
c_2(n\log(\lambda^{*}n))^{-1/2} \rho^{-n}C(\rho)^{N} ,\mkern-18mu&\lambda=\lambda^{*} \\
c_{+}(\lambda)n^{-1/2}x_\lambda^{-n} C(x_\lambda)^N ,\mkern-18mu&\lambda \in (\lambda^{*},1)
\end{cases}\!.
\end{align*}
\end{theorem}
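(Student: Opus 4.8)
The plan is to realize the exact identity $\tfrac{N!}{n!}\lvert\mathcal{G}_{n,N}\rvert = [x^n]C(x)^N$ from \eqref{introduction:eq:largepowerofgenfunc} probabilistically through exponential tilting (Boltzmann sampling). For any $x\in(0,\rho]$ let $S^{(x)}$ be the size of a Boltzmann-distributed $\mathcal{C}$-object, i.e. $\mathbb{P}(S^{(x)}=k)=c_k x^k/(k!\,C(x))$ with $c_k=\lvert\mathcal{C}_k\rvert$; this is well defined even at $x=\rho$ because $\alpha>1$ renders $C(\rho),C'(\rho)$ finite. If $S_1,\dots,S_N$ are iid copies and $\Sigma_N=S_1+\dots+S_N$, then comparing coefficients shows
\[
[x^n]C(x)^N=\frac{C(x)^N}{x^n}\,\mathbb{P}(\Sigma_N=n),
\]
valid for every admissible $x$. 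The mean $\mathbb{E}[S^{(x)}]=xC'(x)/C(x)$ increases on $(0,\rho]$ to its maximum $1/\lambda^{*}$ at $x=\rho$ by \eqref{main:eq:lambdastar}. The strategy is to choose the tilt $x$ so that $\mathbb{E}[\Sigma_N]=N\,xC'(x)/C(x)$ sits as close as possible to the target $n\approx N/\lambda$, and then to estimate the local probability $\mathbb{P}(\Sigma_N=n)$ by the appropriate limit theorem. The three regimes of the theorem correspond exactly to whether the target lies above, at, or below the reachable mean, and in all cases the walk is aperiodic since $\mathcal{C}_1\neq\emptyset$ forces $1$ into the support.

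For $\lambda\in(\lambda^{*},1)$ I would tilt at $x=x_\lambda$ from Lemma~\ref{main:lem:uniquesolution}, so that $\mathbb{E}[\Sigma_N]=N/\lambda=n+O(1)$ by \eqref{main:eq:uniquesolution}. Since $x_\lambda<\rho$, the law of $S^{(x_\lambda)}$ has exponentially decaying tails and in particular a finite variance, which a direct computation identifies with $\sigma^2_\lambda$ in \eqref{main:eq:sigma}. A classical local central limit theorem then yields $\mathbb{P}(\Sigma_N=n)\sim(2\pi N\sigma^2_\lambda)^{-1/2}$, and substituting into the identity together with $N\sim\lambda n$ produces the claimed $c_{+}(\lambda)n^{-1/2}x_\lambda^{-n}C(x_\lambda)^N$. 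For $\lambda\in(0,\lambda^{*})$ no tilt can raise the mean up to $n$, so I would tilt at the boundary $x=\rho$, where $\mathbb{E}[\Sigma_N]=N/\lambda^{*}<n$ and $\{\Sigma_N=n\}$ becomes a large deviation. At $x=\rho$ the summands carry the heavy polynomial tail $\mathbb{P}(S^{(\rho)}=k)\sim (b/C(\rho))\,k^{-3}$ inherited from \eqref{eq:heavyAssumption}, which is subexponential; hence the single-big-jump principle applies and $\mathbb{P}(\Sigma_N=n)\sim N\,\mathbb{P}\bigl(S^{(\rho)}=n-(N-1)/\lambda^{*}\bigr)$. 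Evaluating the tail at $n-(N-1)/\lambda^{*}\sim n(1-\lambda/\lambda^{*})$ and inserting into the identity gives $c_{-}(\lambda)n^{-2}\rho^{-n}C(\rho)^N$ with $c_{-}(\lambda)=b\lambda/\bigl(C(\rho)(1-\lambda/\lambda^{*})^{3}\bigr)$, matching the subcritical line.

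The critical case $\lambda=\lambda^{*}$ is the main obstacle, and the only place where $\alpha=2$ genuinely differs from the regime $1<\alpha<2$. Here $x_{\lambda^{*}}=\rho$, so the mean is matched ($\mathbb{E}[\Sigma_N]=n+O(1)$), but the tail $k^{-3}$ makes the variance of $S^{(\rho)}$ only just infinite: the truncated second moment $U(x)=\mathbb{E}\bigl[(S^{(\rho)})^2\mathbf{1}_{S^{(\rho)}\le x}\bigr]$ is slowly varying with $U(x)\sim (b/C(\rho))\log x$. Thus $S^{(\rho)}$ lies in the domain of attraction of the normal law, but with the nonstandard norming $a_N$ determined by $a_N^2\sim N\,U(a_N)$, whence $a_N^2\sim \bigl(b/(2C(\rho))\bigr)\,N\log N$. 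A local (not merely integral) limit theorem in this borderline domain then gives $\mathbb{P}(\Sigma_N=n)\sim(2\pi a_N^2)^{-1/2}$, and with $N=\lfloor\lambda^{*}n\rfloor$ and $\log N\sim\log(\lambda^{*}n)$ this equals $c_2\,(n\log(\lambda^{*}n))^{-1/2}$; the identity then yields the middle line. The delicate points I expect are pinning down the slowly varying function $U$ with the correct constant, so that $c_2=\sqrt{C(\rho)/(b\pi\lambda^{*})}$ emerges exactly, and invoking a local limit theorem whose error is uniform over the window $n-\mathbb{E}[\Sigma_N]=O(1)$ while controlling the lattice span; everything else reduces to the routine substitutions sketched above.
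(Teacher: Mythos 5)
Your proposal is correct and follows essentially the same route as the paper: the Boltzmann/tilting identity reducing $\frac{N!}{n!}\lvert\mathcal{G}_{n,N}\rvert$ to $C(x)^N x^{-n}\,\mathbb{P}(\Sigma_N=n)$, a classical local CLT at $x=x_\lambda$ for $\lambda>\lambda^{*}$, a local large-deviation (single-big-jump) estimate at $x=\rho$ for $\lambda<\lambda^{*}$, and at $\lambda=\lambda^{*}$ a local limit theorem in the normal domain of attraction with the nonstandard norming $b_N^2\sim \tfrac{b}{2C(\rho)}N\log N$, yielding exactly the paper's constants $c_{-}$, $c_2$, and $c_{+}$. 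The only cosmetic difference is that you derive the norming from the truncated second moment, whereas the paper cites Janson's computation together with the Ibragimov--Linnik local limit theorem; the substance is identical.
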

Finally, in the remaining cases we have a different behavior with only two asymptotic regimes:
\begin{theorem}\label{main:bigthm:3}
Let $\alpha>2$. Then
\begin{align*}
\frac{N!}{n!}\lvert\mathcal{G}_{n,N}\rvert\sim
\begin{cases}
c_{-}(\lambda)n^{-\alpha}\rho^{-n} C(\rho)^N,&\lambda \in (0,\lambda^{*}) \\
c_{+}(\lambda)n^{-1/2}\rho^{-n}C(\rho)^N ,&\lambda \in [\lambda^{*},1)
\end{cases}.
\end{align*}
\end{theorem}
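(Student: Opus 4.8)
The plan is to realize the coefficient $[x^n]C(x)^N$ probabilistically through the Boltzmann model for $\mathcal{C}$, as announced in the introduction. For a parameter $x\in(0,\rho]$ let $S$ be the size of a Boltzmann-distributed $\mathcal{C}$-object, so that $\mathbb{P}(S=m)=\tfrac{|\mathcal{C}_m|}{m!}\tfrac{x^m}{C(x)}$, and let $S_1,\dots,S_N$ be i.i.d.\ copies with $W_N:=S_1+\cdots+S_N$. Expanding $C(x)^N$ and grouping by total size gives the key identity
\begin{align*}
\frac{N!}{n!}\lvert\mathcal{G}_{n,N}\rvert=[x^n]C(x)^N=\frac{C(x)^N}{x^n}\,\mathbb{P}(W_N=n),
\end{align*}
valid for every $x\in(0,\rho]$. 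A short computation shows $\mathbb{E}S=xC'(x)/C(x)$ with derivative $\operatorname{Var}(S)/x>0$, so $\mathbb{E}S$ increases on $(0,\rho]$ to the maximal value $\rho C'(\rho)/C(\rho)=1/\lambda^{*}$. The proof then splits according to whether the mean equation $\mathbb{E}S=1/\lambda$ is solvable in $(0,\rho]$.

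\textbf{The regime $\lambda\in[\lambda^{*},1)$ (central case).} I would tune the Boltzmann parameter to $x=x_\lambda$ from Lemma~\ref{main:lem:uniquesolution}, so that $\mathbb{E}S=1/\lambda$ and hence $\mathbb{E}W_N=N/\lambda$ agrees with $n$ up to the $O(1)$ error coming from $N=\lfloor\lambda n\rfloor$, which is well within the $O(\sqrt N)$ central window. The feature specific to $\alpha>2$ is that the Boltzmann size then has \emph{finite variance}: at the boundary $x=\rho$ one has $\mathbb{P}(S=m)\sim bm^{-(1+\alpha)}/C(\rho)$, so $\mathbb{E}[S^2]<\infty$ exactly because $1-\alpha<-1$, and for $x_\lambda<\rho$ the tail is even exponentially light. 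Identifying $\operatorname{Var}(S)=\tfrac{x_\lambda^2C''(x_\lambda)}{C(x_\lambda)}+\tfrac1\lambda-\tfrac1{\lambda^2}=\sigma_\lambda^2$ as in \eqref{main:eq:sigma}, and using that $\mathcal{C}_1\neq\emptyset$ forces the lattice span to be $1$, a standard local central limit theorem for finite-variance lattice sums yields $\mathbb{P}(W_N=n)\sim(2\pi N\sigma_\lambda^2)^{-1/2}$. Substituting this together with $N\sim\lambda n$ into the key identity produces $c_{+}(\lambda)n^{-1/2}\,x_\lambda^{-n}C(x_\lambda)^N$, which at the boundary $x_{\lambda^{*}}=\rho$ is the asserted $\rho^{-n}C(\rho)^N$.

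\textbf{The regime $\lambda\in(0,\lambda^{*})$ (large-deviation case).} Here the mean equation has no solution in $(0,\rho]$, so I would fix $x=\rho$. Then $\mathbb{E}W_N=N/\lambda^{*}$ while we demand $W_N=n$ with $n\approx N/\lambda>N/\lambda^{*}$, a deviation of order $n$ above the mean. Since $\mathbb{P}(S=m)\sim bm^{-(1+\alpha)}/C(\rho)$ is a regularly varying (hence subexponential) lattice tail, this event is controlled by the single-big-jump principle: one summand carries essentially the entire excess while the remaining $N-1$ sit near their mean, giving the local estimate
\begin{align*}
\mathbb{P}(W_N=n)\sim N\,\mathbb{P}\!\left(S=n-(N-1)/\lambda^{*}\right)\sim\frac{b\lambda}{C(\rho)(1-\lambda/\lambda^{*})^{1+\alpha}}\,n^{-\alpha},
\end{align*}
where the second step inserts $N\sim\lambda n$ and evaluates the tail at $n-(N-1)/\lambda^{*}\sim n(1-\lambda/\lambda^{*})$; the shift by $(N-1)/\lambda^{*}$ is exactly what produces the factor $(1-\lambda/\lambda^{*})^{-(1+\alpha)}$. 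The right-hand side is $c_{-}(\lambda)n^{-\alpha}$, and the key identity then yields $c_{-}(\lambda)n^{-\alpha}\rho^{-n}C(\rho)^N$.

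The reductions themselves are elementary; the real work lies in the two limit theorems and their uniformity. For $\lambda\ge\lambda^{*}$ one must control the local CLT precisely enough that the $O(1)$ offset $n-\mathbb{E}W_N$ and the floor in $N=\lfloor\lambda n\rfloor$ do not perturb the leading constant, and verify finiteness and continuity of $\sigma_\lambda^2$ up to $x=\rho$ — delicate precisely because $\alpha>2$ is the integrability threshold for the second moment, which also explains why no extra phase transition appears at $\lambda^{*}$ in contrast with $\alpha\le 2$. The main obstacle, however, is the subcritical case: one must upgrade the big-jump heuristic to a genuine \emph{local} statement for $\mathbb{P}(W_N=n)$ (not merely the tail $\mathbb{P}(W_N\ge n)$) that holds uniformly as $n\to\infty$ with $N=\lfloor\lambda n\rfloor$, which requires a sharp local large-deviation result for subexponential lattice variables and is the crux of the argument.
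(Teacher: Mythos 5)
Your proposal is correct and follows essentially the same route as the paper: the identity $\frac{N!}{n!}\lvert\mathcal{G}_{n,N}\rvert=C(x)^{N}x^{-n}\,\mathbb{P}(W_N=n)$ is exactly the paper's Lemma \ref{prelim:lem:sumiid}, the finite-variance local central limit theorem at $x=x_\lambda$ (with $x_{\lambda^{*}}=\rho$) is the paper's treatment of the cases $\lambda\in[\lambda^{*},1)$ via Lemma \ref{prelim:lem:classlocal}, and the single-big-jump local estimate $\mathbb{P}(W_N=n)\sim N\,\mathbb{P}(S=n-N/\lambda^{*})$ at $x=\rho$ is precisely the paper's Lemma \ref{prelim:lem:largedevi} (Doney) for $\lambda\in(0,\lambda^{*})$. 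Note that for $\lambda\in(\lambda^{*},1)$ your answer $c_{+}(\lambda)n^{-1/2}x_\lambda^{-n}C(x_\lambda)^{N}$ agrees with the paper's proof and with Theorems \ref{main:bigthm:1} and \ref{main:bigthm:2}, whereas the displayed statement of Theorem \ref{main:bigthm:3} writes $\rho^{-n}C(\rho)^{N}$ throughout $[\lambda^{*},1)$ — that appears to be a typo in the statement rather than a flaw in your argument.
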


\section{Preparations for the Proof}
	\label{sec:prelim}
	In this section we collect some preliminary results that will be very handy in the forthcoming proofs.  We begin with the so-called \emph{Boltzmann model}, that will enable us to study the quantities of interest by reducing them to properties of independent random variables, see Lemma~\ref{prelim:lem:sumiid}.

\subsection*{Boltzmann Model} 
Recall that the egf  $F(x)$ of an analytic species $\cF$ is such that the radius of convergence is strictly greater than zero. Without explicitly mentioning it, we assume throughout that $x>0$ is chosen such that $F(x)$ is finite. The \textit{Boltzmann model}, introduced in \cite{duchon2004}, defines a random variable $\Gamma F(x)$ taking values in the entire species $\mathcal{F}$.
\begin{Definition}
The $\mathcal{F}$-valued random variable $\Gamma F(x)$ fulfilling
\begin{align*}
\mathbb{P}(\Gamma F(x)=F)=\frac{1}{F(x)}\frac{x^{\lvert F\rvert}}{\lvert F\rvert !}, \quad F\in\cup_{n \ge 0}\cF_n,
\end{align*}
is called \textit{Boltzmann generator} (with parameter $x$ for $\mathcal{F}$).
\end{Definition}
As mentioned in Section \ref{sec:main}, we identify $\mathcal{F}$ with $\bigcup_{n\in\mathbb{N}_0}\mathcal{F}_n$ for convenience and hence any object $F\in\mathcal{F}$ generated by $\Gamma F(x)$ is assumed to have labels in $[\lvert F\rvert ]$.
Simple computations show that 
\begin{equation}\label{prelim:eq:mean}
\mathbb{E}[\lvert\Gamma F(x)\rvert] =\frac{xF'(x)}{F(x)} \quad\text{and}\quad
\mathbb{E}[\lvert\Gamma F(x)\rvert^2] =\frac{x^2F''(x)+xF'(x)}{F(x)}.
\end{equation}
Note that the expressions above might not be finite for all $x$ within the radius of convergence of $F(x)$, compare to \eqref{proof:finitemean}--\eqref{proof:infinitevariance}.
An important property of the \textit{Boltzmann generator} is that for any $F$ in $\mathcal{F}_n$ the probability $\{\Gamma F(x)=F\}$ depends \emph{only} on $n$, which implies that
\begin{align*}
\mathbb{P}(\Gamma F(x)\in\mathcal{F}_n)=\frac{\lvert\mathcal{F}_n\rvert x^n}{F(x)n!}\quad\text{and}\quad
\mathbb{P}(\Gamma F(x)=F\given[\big]\Gamma F(x)\in\mathcal{F}_n)={\lvert\mathcal{F}_n\rvert}^{-1},
\end{align*}
i.e., the \textit{Boltzmann model} induces a uniform distribution on objects with the same size. \\

In the context of our intended application, recall that $\mathcal{G}=\text{{\scshape SET}}\circ\mathcal{C}$ is the species of which the components are objects of an underlying species $\mathcal{C}$. Then, an immediate consequence of the uniform distribution property is 
\begin{align}
\label{prelim:eq:gnasprob}
\mathbb{P}(\kappa(\Gamma G(x))=k\given[\big] \lvert\Gamma G(x)\rvert=n)=\frac{\lvert\mathcal{G}_{n,k}\rvert}{\lvert\mathcal{G}_n\rvert},k\in[n].
\end{align} 
Consequently, the problem of deriving $\lvert\mathcal{G}_{n,N}\rvert$ for $N:=\floor*{\lambda n}$ reduces to investigating the latter probability. Therefore, consider following algorithm, where each step is launched independently from the others.
\begin{enumerate}
\item Let $\kappa(x)$ follow the Poisson distribution with parameter $C(x)$.
\item Let $\gamma_1(x),\dots,\gamma_{\kappa(x)}(x)$ be iid copies of $\Gamma C(x)$.
\item Draw $L_1(x),\dots,L_{\kappa(x)}(x)$ uniformly at random from all  partitions of $\left[\sum_{i \leq \kappa(x)} \lvert\gamma_i(x)\rvert\right]$ with $\lvert L_i(x)\rvert = \lvert\gamma_i(x)\rvert$ for all $i\leq \kappa(x)$.
\item Relabel $\gamma_1(x),\dots,\gamma_{\kappa(x)}(x)$ canonically by $L_1(x),\dots,L_{\kappa(x)}(x)$.
\item Return an unordered sequence of the relabeled objects.
\end{enumerate}
Denote the outcome of this algorithm by $G^{*}(x)$. It is shown in \cite{duchon2004} that $G^{*}(x)$ indeed is a \textit{Boltzmann generator} for $\mathcal{G}$, that is, the following statement is true.
\begin{lemma}\label{prelim:lem:boltz}
For any $G\in \mathcal{G}$
\begin{align*}
\mathbb{P}(G^{*}(x)=G)=\mathbb{P}(\Gamma G(x)=G).
\end{align*}
\end{lemma}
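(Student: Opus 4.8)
The plan is to verify the identity by computing the left-hand side $\mathbb{P}(G^{*}(x)=G)$ directly, tracing a fixed target object $G\in\mathcal{G}$ through the five steps of the algorithm and comparing the result with the definition $\mathbb{P}(\Gamma G(x)=G)=\frac{1}{G(x)}\frac{x^{\lvert G\rvert}}{\lvert G\rvert!}$. Since $\mathcal{G}=\text{{\scshape SET}}\circ\mathcal{C}$, the composition rule for generating series gives $G(x)=\mathrm{e}^{C(x)}$, so the target value is $\mathrm{e}^{-C(x)}x^{n}/n!$, where $n:=\lvert G\rvert$. Write $k:=\kappa(G)$ for the number of components of $G$, let $B_1,\dots,B_k$ be the distinct label blocks of its components, forming a partition of $[n]$ with $n_j:=\lvert B_j\rvert$, and let $\tilde g_j\in\mathcal{C}_{n_j}$ denote the canonical relabeling of the $j$-th component onto $[n_j]$.

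First I would determine which intermediate outcomes of Steps 1--4 can produce $G$. The output has exactly $n$ labels and $k$ components, which forces $\kappa(x)=k$ and $\sum_{i\le k}\lvert\gamma_i(x)\rvert=n$. Because the relabeling in Step 4 is the deterministic order-preserving bijection onto each block, the unordered collection returned in Step 5 equals $G$ precisely when there is a permutation $\phi$ of $[k]$ with $L_i(x)=B_{\phi(i)}$ and $\gamma_i(x)=\tilde g_{\phi(i)}$ for every $i$; conversely each such $\phi$ determines exactly one admissible pair of sequences, so there are exactly $k!$ of them. I would then multiply the relevant probabilities for one such configuration: the Poisson weight $\mathrm{e}^{-C(x)}C(x)^{k}/k!$ from Step 1, the iid Boltzmann weight $\prod_{i}\frac{1}{C(x)}\frac{x^{\lvert\gamma_i(x)\rvert}}{\lvert\gamma_i(x)\rvert!}=\frac{1}{C(x)^{k}}\frac{x^{n}}{\prod_j n_j!}$ from Step 2, and the uniform weight $\frac{\prod_j n_j!}{n!}$ for a prescribed ordered set partition into blocks of sizes $n_1,\dots,n_k$ from Step 3. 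The factors $\prod_j n_j!$ and $C(x)^{k}$ cancel, leaving $\mathrm{e}^{-C(x)}\frac{1}{k!}\frac{x^{n}}{n!}$ per permutation; summing over the $k!$ admissible configurations absorbs the $1/k!$ and yields $\mathrm{e}^{-C(x)}x^{n}/n!=\frac{1}{G(x)}\frac{x^{n}}{n!}$, as desired.

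The arithmetic of this cancellation is routine; the point requiring care is the combinatorial bookkeeping behind the count $k!$. In particular, I would verify that this count remains correct when $G$ has isomorphic components, i.e. equal canonical forms carried by distinct blocks: such components still sit on distinct label sets $B_j$, so each permutation $\phi$ produces a genuinely distinct pair of sequences $(\gamma_i(x))_i,(L_i(x))_i$, with no double-counting. I would also emphasize that no summation over $k$ is needed, since $k=\kappa(G)$ is forced by the target, and that the order-preserving convention for ``canonical'' relabeling guarantees each $\gamma_i(x)$ is uniquely determined once the block $B_{\phi(i)}$ and the associated component of $G$ are fixed. This discrete computation is precisely the {\scshape SET}-composition step of the general Boltzmann-sampler theory of \cite{duchon2004}, specialized here to $\mathcal{G}=\text{{\scshape SET}}\circ\mathcal{C}$.
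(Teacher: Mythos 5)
Your proposal is correct: the case analysis forcing $\kappa(x)=k$ and the $k!$ admissible pairs of sequences, the multinomial count $n!/\prod_j n_j!$ in Step 3, and the resulting cancellation down to $\mathrm{e}^{-C(x)}x^n/n!=G(x)^{-1}x^n/n!$ are all sound, including the check that isomorphic components cause no double-counting because the blocks $B_j$ are distinct subsets of $[n]$. Note that the paper does not prove this lemma at all -- it simply cites \cite{duchon2004} -- so your computation is exactly the standard {\scshape SET}-composition verification from that reference, written out in full; it fills in the omitted details correctly.
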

This allows us to immediately translate our initial problem of determining $\lvert\mathcal{G}_{n,N}\rvert$ into the analysis of stochastic processes as shown in the next lemma.
\begin{lemma}\label{prelim:lem:sumiid}
Let $k\in [n]$ and $\gamma_1(x),\dots,\gamma_k(x)$ be iid random variables distributed like $\Gamma C(x)$. Then
\begin{align*}
\frac{k!}{n!}\lvert\mathcal{G}_{n,k}\rvert=C(x)^{k}x^{-n}\cdot\mathbb{P}\left(\sum\limits_{i\leq k}\lvert\gamma_i(x)\rvert=n\right).
\end{align*}
\end{lemma}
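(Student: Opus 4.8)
The plan is to express the left-hand side directly as a coefficient-extraction identity and then recognize the coefficient as the probability of a sum of iid random variables. First I would recall from \eqref{introduction:eq:largepowerofgenfunc} that
\begin{align*}
\lvert\mathcal{G}_{n,k}\rvert=\frac{n!}{k!}[x^n]C(x)^{k},
\end{align*}
so that $\frac{k!}{n!}\lvert\mathcal{G}_{n,k}\rvert=[x^n]C(x)^k$; equivalently, writing out the coefficient at a fixed value $x>0$ in the radius of convergence, $C(x)^k=\sum_{m\geq 0}\big([x^m]C(x)^k\big)x^m$. The key idea is that the coefficient $[x^n]C(x)^k$, once rescaled by $x^{-n}C(x)^{k}$, is precisely a probability in the Boltzmann model, because $\mathbb{P}(\lvert\Gamma C(x)\rvert=m)=\frac{\lvert\mathcal{C}_m\rvert x^m}{C(x)m!}$ gives the size distribution of a single component.

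The main computation is then the following. Let $\gamma_1(x),\dots,\gamma_k(x)$ be iid copies of $\Gamma C(x)$ and consider the event $\{\sum_{i\leq k}\lvert\gamma_i(x)\rvert=n\}$. By independence and the convolution formula for the law of a sum,
\begin{align*}
\mathbb{P}\Big(\sum_{i\leq k}\lvert\gamma_i(x)\rvert=n\Big)
=\sum_{\substack{m_1+\dots+m_k=n\\ m_i\geq 0}}\prod_{i=1}^{k}\frac{\lvert\mathcal{C}_{m_i}\rvert x^{m_i}}{C(x)\,m_i!}
=\frac{x^{n}}{C(x)^{k}}\sum_{m_1+\dots+m_k=n}\prod_{i=1}^{k}\frac{\lvert\mathcal{C}_{m_i}\rvert}{m_i!}.
\end{align*}
The inner sum over compositions is exactly the coefficient $[x^n]C(x)^{k}$, since $C(x)=\sum_{m\geq 0}\frac{\lvert\mathcal{C}_m\rvert}{m!}x^m$ and multiplying $k$ such series and extracting the $x^n$-coefficient produces precisely $\sum_{m_1+\dots+m_k=n}\prod_i \frac{\lvert\mathcal{C}_{m_i}\rvert}{m_i!}$. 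Combining the two displays, $\mathbb{P}(\sum_{i\leq k}\lvert\gamma_i(x)\rvert=n)=\frac{x^n}{C(x)^k}[x^n]C(x)^k=\frac{x^n}{C(x)^k}\cdot\frac{k!}{n!}\lvert\mathcal{G}_{n,k}\rvert$, and rearranging yields the claimed identity $\frac{k!}{n!}\lvert\mathcal{G}_{n,k}\rvert=C(x)^{k}x^{-n}\,\mathbb{P}(\sum_{i\leq k}\lvert\gamma_i(x)\rvert=n)$.

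I do not expect a genuine obstacle here: the statement is essentially a bookkeeping identity translating the generating-function convolution into a probabilistic convolution, and the result holds for every admissible $x>0$ (both sides are independent of $x$, which is a useful consistency check). The one point requiring minor care is justifying the interchange of summation when passing from the product of the series to the composition sum; this is legitimate because all coefficients $\lvert\mathcal{C}_m\rvert/m!$ are nonnegative and $x$ lies strictly inside the radius of convergence, so the Cauchy product converges absolutely. An alternative, slightly more conceptual route I could take is to invoke Lemma~\ref{prelim:lem:boltz} together with \eqref{prelim:eq:gnasprob}: the algorithm producing $G^{*}(x)$ generates exactly $k$ components when $\kappa(x)=k$, and conditioning the Boltzmann generator on having total size $n$ recovers the uniform distribution on $\mathcal{G}_{n,k}$, from which the same identity follows after tracking the Poisson and size factors; however, the direct convolution argument above is shorter and self-contained.
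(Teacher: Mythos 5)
Your proof is correct, but it takes a genuinely different route from the paper's. The paper never touches the coefficient identity \eqref{introduction:eq:largepowerofgenfunc} in its proof; instead it starts from \eqref{prelim:eq:gnasprob}, uses the correctness of the composite sampler $G^{*}(x)$ (Lemma~\ref{prelim:lem:boltz}) to see that $\kappa(\Gamma G(x))$ is Poisson with parameter $C(x)$ and that, conditionally on $\kappa=k$, the total size is a sum of $k$ iid copies of $\lvert\Gamma C(x)\rvert$, and then applies Bayes' theorem together with $\mathbb{P}(\lvert\Gamma G(x)\rvert=n)=\lvert\mathcal{G}_n\rvert \mathrm{e}^{-C(x)}x^n/n!$ and $G(x)=\mathrm{e}^{C(x)}$. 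You instead take \eqref{introduction:eq:largepowerofgenfunc} as given and unwind $[x^n]C(x)^k$ as a convolution of the single-component size distribution $\mathbb{P}(\lvert\Gamma C(x)\rvert=m)=\lvert\mathcal{C}_m\rvert x^m/(C(x)m!)$. Your version is shorter and more self-contained: it needs only the Boltzmann size distribution for $\mathcal{C}$ and absolute convergence of the Cauchy product (which you correctly justify by nonnegativity of coefficients and $x$ inside the radius of convergence), and it does not lean on the cited correctness of the composite sampler. The trade-off is that you rely on \eqref{introduction:eq:largepowerofgenfunc}, which the paper only asserts in the introduction as "easily verified" (it is the standard $\mathrm{SET}_k$ coefficient formula, so this is harmless), whereas the paper's Bayes argument derives the identity entirely inside the Boltzmann framework it is setting up and thereby also exhibits the conditional-distribution structure that the later local-limit arguments exploit. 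The alternative route you sketch in your last sentence is essentially the paper's actual proof.
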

\begin{proof}
Our starting point is Equation \eqref{prelim:eq:gnasprob}. Note that $\kappa(\Gamma G(x))$ follows the $\mathrm{Poisson}$ distribution with parameter $C(x)$ due to Lemma \ref{prelim:lem:boltz}. Further, in the Boltzmann model the event $\{\lvert\Gamma G(x)\rvert=n\}$ occurs with probability $\lvert\mathcal{G}_n\rvert \mathrm{e}^{-C(x)}\frac{x^n}{n!}$; note that we used $G(x)=\mathrm{e}^{C(x)}$, which immediately follows by combining the egfs of the \text{{\scshape SET}} and substitution species. By  Bayes' theorem and Lemma \ref{prelim:lem:boltz} we obtain
\begin{align*}
\lvert\mathcal{G}_{n,k}\rvert&=\lvert\mathcal{G}_n\rvert\frac{\mathbb{P}(\lvert\Gamma G(x)\rvert =n\given[\big]\kappa(\Gamma G(x))=k)}{\mathbb{P}(\lvert\Gamma G(x))\rvert=n)} \cdot \mathbb{P}(\kappa(\Gamma G(x))=k) \\
&= \frac{n!}{k!}C(x)^{k}x^{-n}\mathbb{P}\left(\sum\limits_{i\leq k}\lvert\gamma_i(x)\rvert=n\right).
\end{align*}
\end{proof}
In the next subsections we will collect all necessary tools to study the asymptotic magnitude of the expression $\mathbb{P}(\sum_{i\leq N}\lvert\gamma_i(x)\rvert=n)$, that will eventually allow us to prove the main theorems.

\subsection*{Local Central Limit Theorem}
As before, let $\gamma_1(x),\dots,\gamma_{N}(x)$, where $N:=\floor*{\lambda n}$, denote iid copies of $\Gamma C(x)$.  Depending on the actual choice of $x$ the variance of $\lvert\Gamma C(x)\rvert$ is either finite or infinite, see Equations \eqref{proof:finitemean}--\eqref{proof:infinitevariance}, and the equation $\mathbb{E}[\sum_{i\leq N}\lvert\gamma_i(x)\rvert]=n+\mathcal{O}(1)$ allows for a unique solution, as stated in Lemma \ref{main:lem:uniquesolution}. We start with the case where both mean and variance of $\lvert\Gamma C(x)\rvert$ are finite.

We denote by $\mathcal{N}(0,\sigma^2)$ a normally distributed random variable with mean 0 and variance $\sigma^2>0$, i.e. the density is
\begin{align*}
\varphi_{\sigma^2}(x)=\frac{1}{\sqrt{2\pi\sigma^2}}\mathrm{e}^{-\frac{x^2}{2\sigma^2}}, \quad x\in\mathbb{R}.
\end{align*}
The following result is a local limit theorem for sums of independent random variables that have finite mean and variance, see \cite{Davis1995}.
\begin{lemma}
\label{prelim:lem:classlocal}
Let $(X_i)_{i\in\mathbb{N}}$ be independent, integer valued random variables and set $S_N:=\sum_{i\leq N}X_i$. Define $q_i:=\sum_{k\in\mathbb{Z}}\min\{\mathbb{P}(X_i=k),\mathbb{P}(X_i=k+1)\}$ and let $Q_N:=\sum_{i\leq N}q_i$. Assume there exist sequences $(a_i)_{i\in\mathbb{N}}$ and $(b_i)_{i\in\mathbb{N}}$ such that, as $N \to \infty$,
\begin{itemize}
\item $b_N>0$ for all $N$ and $b_N\rightarrow\infty$,
\item $b_N^{-1}(S_N-a_N) \overset{(\mathrm{d})}{\rightarrow}\mathcal{N}(0,1)$ and
\item $\sup_{k\leq N}{b_k^2}{Q_k^{-1}}$ has a finite limit.
\end{itemize}
Then, as $N \to \infty$, 
\begin{align*}
\sup\limits_{s\in\mathbb{Z}}\left\lvert b_N\mathbb{P}(S_N=s)-\varphi_1\left(\frac{s-a_N}{b_N}\right)\right\rvert\rightarrow 0.
\end{align*}
\end{lemma}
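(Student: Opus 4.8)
The plan is to argue by Fourier inversion on the torus, turning the pointwise (in $s$) comparison into an integral comparison of characteristic functions. Write $\psi_i(t)=\mathbb{E}[e^{itX_i}]$ and $\Psi_N(t)=\prod_{i\le N}\psi_i(t)=\mathbb{E}[e^{itS_N}]$. Since $S_N$ is integer valued, inversion on $[-\pi,\pi]$ gives $\mathbb{P}(S_N=s)=\frac{1}{2\pi}\int_{-\pi}^{\pi}e^{-its}\Psi_N(t)\,dt$, while the Gaussian density is $\frac{1}{b_N}\varphi_1\!\left(\frac{s-a_N}{b_N}\right)=\frac{1}{2\pi}\int_{\mathbb{R}}e^{-its}e^{ita_N}e^{-b_N^2t^2/2}\,dt$. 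Multiplying by $b_N$ and substituting $t=u/b_N$, every factor $e^{-ius/b_N}$ has modulus one, so the quantity to be controlled is bounded \emph{uniformly in $s$} by
\begin{align*}
\sup_{s\in\mathbb{Z}}\left| b_N\mathbb{P}(S_N=s)-\varphi_1\!\left(\tfrac{s-a_N}{b_N}\right)\right|
\le \frac{1}{2\pi}\int_{-\pi b_N}^{\pi b_N}\left|\Psi_N(u/b_N)-e^{iua_N/b_N}e^{-u^2/2}\right|\,du
+\frac{1}{2\pi}\int_{|u|>\pi b_N}e^{-u^2/2}\,du.
\end{align*}
The second term vanishes because $b_N\to\infty$, so everything reduces to showing that the first integral tends to $0$.

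Next I would split the domain $\{|u|\le\pi b_N\}$ at a threshold $|u|=A$ for a large constant $A$. On the central part $|u|\le A$ the second hypothesis enters directly: convergence in distribution of $b_N^{-1}(S_N-a_N)$ to $\mathcal{N}(0,1)$ is exactly pointwise convergence of its characteristic function $u\mapsto e^{-iua_N/b_N}\Psi_N(u/b_N)$ to $e^{-u^2/2}$. Since the integrand is bounded by $2$ on the bounded set $\{|u|\le A\}$, dominated convergence forces this contribution to $0$ for each fixed $A$.

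The main obstacle is the band $A<|u|\le\pi b_N$, where I need a bound on $|\Psi_N(u/b_N)|$ that is both uniform in $N$ and integrable in $u$; this is precisely where $q_i$ and $Q_N$ are used. The key single-variable estimate is $|\psi_i(t)|\le 1-q_i\bigl(1-\cos(t/2)\bigr)$ for $|t|\le\pi$, which I would prove by a mixture decomposition: writing $p_k^{(i)}=\mathbb{P}(X_i=k)$ and $m_k^{(i)}=\min\{p_k^{(i)},p_{k+1}^{(i)}\}$, the elementary inequality $\tfrac12 m_{j-1}^{(i)}+\tfrac12 m_j^{(i)}\le p_j^{(i)}$ shows that the law of $X_i$ dominates $q_i$ times the probability measure $q_i^{-1}\sum_k m_k^{(i)}\tfrac12(\delta_k+\delta_{k+1})$. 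Hence $\psi_i=q_i\bar\psi_i+(1-q_i)\hat\sigma_i$ with $|\bar\psi_i(t)|\le|\cos(t/2)|$ and $|\hat\sigma_i|\le 1$, giving the claim. Taking products over $i\le N$, using $1-x\le e^{-x}$ and $1-\cos(t/2)\ge\frac{4}{\pi^2}(t/2)^2$ on $[-\pi,\pi]$, I obtain
\begin{align*}
|\Psi_N(u/b_N)|\le \exp\!\Bigl(-Q_N\bigl(1-\cos(u/(2b_N))\bigr)\Bigr)\le \exp\!\Bigl(-\tfrac{1}{\pi^2}\tfrac{Q_N}{b_N^2}\,u^2\Bigr),\qquad |u|\le\pi b_N.
\end{align*}

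Here the third hypothesis does the decisive work: boundedness of $\sup_{k\le N}b_k^2/Q_k$ guarantees $Q_N/b_N^2\ge c_0>0$ for all large $N$, whence $|\Psi_N(u/b_N)|\le e^{-c_1u^2}$ with $c_1=c_0/\pi^2$ \emph{independent of $N$ and $s$}. Consequently the band integral is at most $\int_{|u|>A}\bigl(e^{-c_1u^2}+e^{-u^2/2}\bigr)\,du$, which can be made arbitrarily small by taking $A$ large, uniformly in $N$. Letting first $N\to\infty$ (killing the central part and the far tail) and then $A\to\infty$ completes the argument. I expect the crux to be the concentration inequality $|\psi_i(t)|\le 1-q_i(1-\cos(t/2))$ together with the verification that the three hypotheses combine into a single $N$-uniform Gaussian majorant for $|\Psi_N|$ over the whole torus; granting that, the remainder is routine Fourier analysis.
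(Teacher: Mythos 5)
The paper does not prove this lemma at all: it is imported verbatim from the cited reference [Davis--McDonald 1995], so there is no in-paper argument to compare against. Judged on its own, your Fourier-analytic proof is correct and complete in all essentials: the inversion identity for integer-valued $S_N$ and the Gaussian, the uniform-in-$s$ reduction to an $L^1$ comparison of characteristic functions, the treatment of the central region $\lvert u\rvert\le A$ via L\'evy continuity plus dominated convergence, and the tail bound. The one step that genuinely needs the hypotheses on $q_i$ is the band $A<\lvert u\rvert\le\pi b_N$, and your mixture decomposition $\mu_i=q_i\nu_i+(1-q_i)\sigma_i$ with $\nu_i$ a convex combination of the two-point laws $\tfrac12(\delta_k+\delta_{k+1})$ is exactly right; it yields $\lvert\psi_i(t)\rvert\le 1-q_i\bigl(1-\cos(t/2)\bigr)$ on $\lvert t\rvert\le\pi$ (where $\cos(t/2)\ge 0$, so the absolute value is harmless), and the third hypothesis -- boundedness of the nondecreasing sequence $\sup_{k\le N}b_k^2Q_k^{-1}$ -- converts this into the $N$-uniform Gaussian majorant $\mathrm{e}^{-c_1u^2}$ that makes the band integral small. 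Your elementary inequality $\tfrac12 m_{j-1}^{(i)}+\tfrac12 m_j^{(i)}\le p_j^{(i)}$ and the bound $1-\cos(t/2)\ge\tfrac{4}{\pi^2}(t/2)^2$ on $\lvert t\rvert\le\pi$ both check out. It is worth noting that the cited source deliberately avoids characteristic functions: Davis and McDonald extract from each $X_i$ a Bernoulli component of mass $q_i$ (the same quantity, used for the same smoothing purpose) and derive the local limit from the ordinary CLT together with the local flatness of sums of independent Bernoullis. So your route is the classical one rather than the "elementary" one the authors point to, but it proves the same statement under the same hypotheses, and each hypothesis plays the role you assign to it.
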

In our context, we will typically use $s=\mathbb{E}[S_N]+\mathcal{O}(1)$, where this theorem yields
\begin{align*}
\mathbb{P}(S_N=\mathbb{E}[S_N]+\mathcal{O}(1))\sim\frac{\varphi_1(0)}{b_N}=\frac{1}{b_N\sqrt{2\pi}}.
\end{align*}

\subsection*{Generalized Local Limit Theorem}
Next we consider the case where the variance of $\lvert\Gamma C(\rho)\rvert$ is infinite. In this case we will use a modified version of the local limit theorem, which is true for so-called stable random variables. 

The following notes are a summary of \cite[Chapter 2.2]{Embrechts2003}. Let $Y,X_1$ and $X_2$ denote real iid random variables. If for any non-negative $a,b$ there exist real $c>0$ and $d$ such that 
\begin{align*}
aX_1+bX_2\overset{(\mathrm{d})}{=}cY+d,
\end{align*}
then $Y$ (and its distribution function) is called \textit{stable}. Analytically, a stable random variable can be described by its characteristic function that is always of the form
\begin{align}\label{prelim:eq:stablecharac}
\Psi(t)=\mathrm{e}^{\mathrm{i}\gamma t-c\lvert t\rvert^{\alpha}(1-\mathrm{i}\beta\mathrm{sign}(t)z(t,\alpha))},\quad t\in\mathbb{R},
\end{align}
where $\gamma$ is real, $c>0$, $\alpha\in(0, 2]$, $\beta\in[-1, 1]$ and
\begin{align*}
z(t,\alpha)=
\begin{cases}
\tan\left(\sqrt{\frac{\pi\alpha}{2}}\right) ,&\alpha\neq 1 \\
-\frac{2}{\pi}\ln\lvert t\rvert ,&\alpha=1
\end{cases}.
\end{align*}
Let $(X_i)_{i\in\mathbb{N}}$ denote iid random variables with common distribution function $F$ and set $S_N:=\sum_{i\leq N}X_i$. If there exist sequences $(a_i)_{i\in\mathbb{N}}$ and $(b_i)_{i\in\mathbb{N}}$, where $b_N>0$ for all $N$, such that, as $N \to \infty$,
\begin{align}
\label{prelim:eq:stable1}
\frac{S_N-a_N}{b_N}\overset{(\mathrm{d})}{\rightarrow} Y
\end{align}
for some random variable $Y$, then $Y$ is necessarily in the class of stable random variables. If in addition the characteristic function of $Y$ is as in \eqref{prelim:eq:stablecharac} for $\alpha\in(0,2]$, then $Y$ is called $\alpha$-stable. We denote by $D_\alpha$ the $\alpha$-stable domain of attraction containing all distribution functions $F$ for which there exist sequences such that the limit in Equation \eqref{prelim:eq:stable1} converges to an $\alpha$-stable distribution. 

In the setup of Section \ref{sec:main}, the sum of iid copies of $\lvert\Gamma C(x)\rvert$ converges to an $\alpha$-stable random variable for $\alpha\in(0,2)$, cf.~\cite[Theorem 2.2.8]{Embrechts2003}. However, in our present setting it is in general not possible to compute the characteristic function of the limit; in particular, it is not possible to determine $\gamma,\beta$ and $c$. In this context, the following generalized local limit theorem for stable random variables derived from \cite[Theorem 4.2.1]{Ibragimov1971} will be useful.
\begin{lemma}
\label{prelim:lem:generallocallimit}
Let $(X_i)_{i\in\mathbb{N}}$ be iid copies of an integer valued random variable $X$ having distribution function $F$ in $D_\alpha$ for $\alpha\in(1, 2]$ and define $S_N:=\sum_{i\leq N}X_i$. Further, assume there exist sequences $(a_i)_{i\in\mathbb{N}}$ and $(b_i)_{i\in\mathbb{N}}$ such that, as $N\to\infty$,
\begin{itemize}
\item $b_N>0$ for all $N$ and
\item $b_N^{-1}(S_N-a_N) \overset{(\mathrm{d})}{\rightarrow} Y$, where $Y$ is $\alpha$-stable.
\end{itemize}
Let $X$ take values in the lattice $\{sh+c:s\in\mathbb{Z}\}$ for given integral constants $h,c$ and assume the span $h$ to be maximal. Then
\begin{align*}
\sup\limits_{s\in\mathbb{Z}}\left\lvert \frac{b_N}{h}\mathbb{P}(S_N=sh+cN)-g_\alpha\left(\frac{sh+cN-a_N}{b_N}\right)\right\rvert\rightarrow 0,
\end{align*}
where $g_\alpha$ is the density of $Y$.
\end{lemma}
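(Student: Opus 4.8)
The plan is to reduce the assertion, by an affine change of the underlying lattice, to the local limit theorem for integer-valued summands in a stable domain of attraction, which is precisely the content of \cite[Theorem 4.2.1]{Ibragimov1971}. The guiding observation is that the affine lattice $\{sh+c:s\in\mathbb{Z}\}$, together with the shift $cN$ it forces in $S_N$, can be absorbed into a rescaling of the summands, after which the normalizing sequences and the limiting density remain unchanged.

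Concretely, I would first set $\tilde X_i:=(X_i-c)/h$. Since $X_i$ takes values in $\{sh+c\}$ with maximal span $h$, each $\tilde X_i$ is integer valued with maximal span $1$, and $\tilde S_N:=\sum_{i\le N}\tilde X_i=(S_N-cN)/h$. An affine transformation preserves membership in $D_\alpha$, so $\tilde X\in D_\alpha$; moreover, setting $\tilde a_N:=(a_N-cN)/h$ and $\tilde b_N:=b_N/h$ one checks directly that
\begin{align*}
\frac{\tilde S_N-\tilde a_N}{\tilde b_N}=\frac{S_N-a_N}{b_N}\overset{(\mathrm d)}{\longrightarrow}Y,
\end{align*}
so the normalized variables coincide for every $N$, the limit is the \emph{same} $\alpha$-stable $Y$, and the density is the \emph{same} $g_\alpha$. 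Because $\alpha\in(1,2]$ and $X\in D_\alpha$, the normalizing constants are regularly varying of index $1/\alpha$, so $b_N\to\infty$ and hence $\tilde b_N\to\infty$; this growth is what makes the local statement nondegenerate.

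I would then invoke \cite[Theorem 4.2.1]{Ibragimov1971} for the integer-valued, maximal-span-$1$ variable $\tilde X$, obtaining
\begin{align*}
\sup_{s\in\mathbb{Z}}\left\lvert\tilde b_N\,\mathbb{P}(\tilde S_N=s)-g_\alpha\left(\frac{s-\tilde a_N}{\tilde b_N}\right)\right\rvert\longrightarrow 0.
\end{align*}
Substituting $\tilde b_N=b_N/h$, using $\mathbb{P}(\tilde S_N=s)=\mathbb{P}(S_N=sh+cN)$, and simplifying $(s-\tilde a_N)/\tilde b_N=(sh+cN-a_N)/b_N$ turns this expression verbatim into the claim of the lemma.

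The one genuine subtlety — and the step I expect to demand the most care — concerns the normalizing sequences. If Theorem 4.2.1 is stated with the canonical sequences coming from the regular-variation representation rather than with the abstract $a_N,b_N$ given to us, the two must be reconciled. By the convergence-of-types theorem, any two normalizations producing the same stable limit satisfy $b_N/b_N'\to 1$ and $(a_N-a_N')/b_N\to 0$; since for $\alpha\in(1,2]$ the density $g_\alpha$ is bounded with bounded derivative, hence uniformly continuous, replacing one normalization by an asymptotically equivalent one perturbs the argument of $g_\alpha$ by $o(1)$ uniformly in $s$ and so does not affect the limit. A self-contained alternative, avoiding the citation, is the Fourier route: write $\tfrac{b_N}{h}\mathbb{P}(S_N=sh+cN)$ via the lattice inversion formula $\tfrac{h}{2\pi}\int_{-\pi/h}^{\pi/h}e^{-it(sh+cN)}\phi_X(t)^N\,dt$, rescale $t=u/b_N$, and compare with $g_\alpha(y)=\tfrac1{2\pi}\int_{\mathbb R}e^{-iuy}\Psi(u)\,du$ (legitimate since $|\Psi(u)|=\mathrm{e}^{-c|u|^\alpha}$ is integrable). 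The difference is then controlled by splitting the integral into $|u|\le A$ (uniform convergence of characteristic functions on compacta), $A<|u|\le\varepsilon b_N$ (the domain-of-attraction bound $|\phi_X(u/b_N)|^N\le C\mathrm{e}^{-c'|u|^\alpha}$), and $\varepsilon b_N<|u|\le\pi b_N/h$ (where maximality of the span yields $|\phi_X(t)|\le 1-\delta$, contributing $O(b_N)(1-\delta)^N\to 0$). In this route the hard part is exactly the uniform exponential bound in the moderate range $A<|u|\le\varepsilon b_N$, which is the technical heart supplied by \cite{Ibragimov1971}.
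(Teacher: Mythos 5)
Your proposal is correct and follows the same route as the paper, which offers no proof of its own but simply states the lemma as ``derived from [Theorem~4.2.1, Ibragimov--Linnik]''; your affine reduction to the span-$1$ integer lattice, the convergence-of-types argument identifying the normalizing sequences, and the sketched Fourier alternative are all sound and merely make that derivation explicit. Nothing further is needed.
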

In order to derive meaningful results from latter lemma, we will choose $nh+aN$ to be $\mathbb{E}[S_N]+\mathcal{O}(1)$ to obtain
\begin{align*}
\mathbb{P}(S_N=\mathbb{E}[S_N]+\mathcal{O}(1))\sim \frac{hg_\alpha(0)}{b_N}.
\end{align*}
The following results are extracted from \cite[Examples 5.5 and 5.10] {Janson2011}, where explicit calculations for $g_\alpha(0)$ are stated. 
\begin{lemma}
\label{prelim:lem:janson}
Let $\alpha\in (1, 2]$ and $X$ be a real-valued random variable. Suppose that there is $c>0$ such that, as $x \to \infty$,
\begin{align*}
\mathbb{P}(X>x)\sim cx^{-\alpha}\quad\text{and}\quad\mathbb{P}(X<-x)=o(x^{-\alpha}).
\end{align*}
Let $(X_i)_{i\in\mathbb{N}}$ be iid and distributed like $X$. Set $S_N:=\sum_{i\leq N}X_i$ and $a_N=\mathbb{E}[S_N]=N\mathbb{E}[X]$. If $\alpha\in(1,2)$, then as $N \to \infty$,
\begin{align*}
N^{-1/\alpha}(S_N-a_N)\overset{(\mathrm{d})}{\rightarrow} Y,
\end{align*}
where $Y$ is $\alpha$-stable with density evaluated at the origin
\begin{align*}
g_\alpha(0)=\frac{1}{(c\lvert\Gamma(1-\alpha)\rvert)^{1/\alpha}\lvert\Gamma(-1/\alpha)\rvert}.
\end{align*}
In the case $\alpha=2$ we obtain
\begin{align*}
(N\log N)^{-1/2}(S_N-a_N)\overset{(\mathrm{d})}{\rightarrow}\mathcal{N}(0,c).
\end{align*}
\end{lemma}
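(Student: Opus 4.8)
The plan is to recognize the statement as a special case of the classical theory of domains of attraction of stable laws (see e.g.\ Feller, or \cite[Theorem 2.2.8]{Embrechts2003} cited above), and to carry out the constant computation by hand. I treat the two regimes separately.

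For $\alpha\in(1,2)$, the first step is to verify membership in $D_\alpha$ and to pin down the skewness. The hypotheses give $\mathbb{P}(X>x)\sim cx^{-\alpha}$ and $\mathbb{P}(X<-x)=o(x^{-\alpha})$, so $\mathbb{P}(\lvert X\rvert>x)\sim cx^{-\alpha}$ is regularly varying of index $-\alpha$ with tail balance $\mathbb{P}(X>x)/\mathbb{P}(\lvert X\rvert>x)\to 1$ and $\mathbb{P}(X<-x)/\mathbb{P}(\lvert X\rvert>x)\to 0$. By the standard characterization this is exactly the condition for $F\in D_\alpha$ with maximal positive skewness $\beta=1$; since $\alpha>1$ the mean $\mathbb{E}[X]$ is finite, centering by $a_N=N\mathbb{E}[X]$ is legitimate, and because the slowly varying part of the tail is the constant $c$ the natural norming is $b_N=N^{1/\alpha}$ (any remaining constant is absorbed into the scale of $Y$). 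This yields the claimed convergence $N^{-1/\alpha}(S_N-a_N)\overset{(\mathrm d)}{\rightarrow}Y$ with $Y$ $\alpha$-stable.

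The second step is to compute $g_\alpha(0)$. The classical tail-to-characteristic-function dictionary (evaluating $\int(\mathrm e^{\mathrm itx}-1-\mathrm itx)\,\mathrm dF(x)$ with the given tail) identifies the limit: for $t>0$ the characteristic function of $Y$ is $\Psi(t)=\exp\!\big(c\lvert\Gamma(1-\alpha)\rvert\,t^\alpha\mathrm e^{-\mathrm i\pi\alpha/2}\big)$, i.e.\ a centered $\alpha$-stable law with $\beta=1$ and scale $c\lvert\Gamma(1-\alpha)\rvert\lvert\cos(\pi\alpha/2)\rvert$. I would then use $g_\alpha(0)=\tfrac1\pi\,\mathrm{Re}\int_0^\infty\Psi(t)\,\mathrm dt$, and after the substitution $u=-c\lvert\Gamma(1-\alpha)\rvert\mathrm e^{-\mathrm i\pi\alpha/2}t^\alpha$ (whose governing constant has positive real part, so the rotated contour integral is a genuine Gamma integral) obtain $g_\alpha(0)=\frac{\Gamma(1/\alpha)\sin(\pi/\alpha)}{\pi\alpha}\,(c\lvert\Gamma(1-\alpha)\rvert)^{-1/\alpha}$. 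Finally the reflection formula $\Gamma(1/\alpha)\Gamma(1-1/\alpha)=\pi/\sin(\pi/\alpha)$ together with $\Gamma(-1/\alpha)=-\alpha\,\Gamma(1-1/\alpha)$ collapses the prefactor to $1/\lvert\Gamma(-1/\alpha)\rvert$, giving the stated value.

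For the boundary case $\alpha=2$ the variance is infinite, but only logarithmically so: the truncated second moment satisfies $V(x):=\mathbb{E}[X^2\mathbf 1_{\{\lvert X\rvert\le x\}}]\sim 2c\log x$, a slowly varying function. This places $F$ at the edge of the \emph{normal} domain of attraction, so the limit is Gaussian rather than a non-normal stable law, and the correct norming is the solution of $N\,V(b_N)/b_N^2\to 1$. Substituting $V(x)\sim 2c\log x$ gives $b_N^2\sim cN\log N$, whence $(N\log N)^{-1/2}(S_N-a_N)\overset{(\mathrm d)}{\rightarrow}\mathcal N(0,c)$, as claimed. The only genuinely delicate part of the whole argument is the constant bookkeeping in the $\alpha\in(1,2)$ case --- correctly relating the tail constant $c$ to the scale of $Y$ and evaluating the complex Gamma integral with the right branch --- while the identification of the domain of attraction and the $\alpha=2$ norming are direct applications of the classical theory.
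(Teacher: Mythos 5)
Your argument is correct, but note that the paper does not prove this lemma at all: it is imported verbatim from the cited reference (Janson's survey on stable distributions, Examples~5.5 and~5.10), so there is no in-paper proof to compare against. What you have written is essentially a self-contained reconstruction of the content of that reference. The substantive steps all check out. For $\alpha\in(1,2)$ the tail hypotheses do place $F$ in the \emph{normal} domain of attraction of a totally skewed ($\beta=1$) $\alpha$-stable law, so the pure power norming $b_N=N^{1/\alpha}$ is legitimate precisely because the slowly varying part of the tail is constant. Your characteristic function $\Psi(t)=\exp\bigl(c\lvert\Gamma(1-\alpha)\rvert\,t^{\alpha}\mathrm{e}^{-\mathrm{i}\pi\alpha/2}\bigr)$ for $t>0$ is consistent ($\cos(\pi\alpha/2)<0$ on $(1,2)$, so $\lvert\Psi(t)\rvert<1$ and $\Psi\in L^{1}$, justifying the inversion $g_\alpha(0)=\pi^{-1}\mathrm{Re}\int_0^\infty\Psi$), and the Gamma-integral evaluation together with the reflection formula and $\Gamma(-1/\alpha)=-\alpha\Gamma(1-1/\alpha)$ does collapse to the stated $g_\alpha(0)=\bigl((c\lvert\Gamma(1-\alpha)\rvert)^{1/\alpha}\lvert\Gamma(-1/\alpha)\rvert\bigr)^{-1}$; I verified the bookkeeping. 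For $\alpha=2$ the truncated second moment $V(x)\sim 2c\log x$ is slowly varying, the norming equation $NV(b_N)/b_N^2\to1$ gives $b_N^2\sim cN\log N$ (the factor $2$ being absorbed by $\log b_N\sim\tfrac12\log N$), and the Gaussian limit $\mathcal{N}(0,c)$ follows --- though strictly speaking this puts $F$ in the domain of attraction of the normal law while lying \emph{outside} the normal domain of attraction, a small terminological slip that does not affect the argument. In short: your proof is valid and supplies the verification that the paper delegates to the literature; the only genuinely delicate point, as you say, is the branch/sign discipline in the contour rotation, which you handle correctly by checking that the governing constant has positive real part.
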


\subsection*{Large Deviations}
So far we have investigated the event $\{\sum_{i\leq N}\lvert\gamma_i(x)\rvert=n\}$ assuming that $\mathbb{E}[\sum_{i\leq N}\lvert\gamma_i(x)\rvert]=n+\mathcal{O}(1)$. Lemma \ref{main:lem:uniquesolution} does not ensure the existence of a pair $(\lambda, x)$ such that latter equation holds if $\lambda\in(0,\lambda^{*})$. In that case we will proceed as follows. First, recall that a positive measurable function $f$ is called slowly varying if
\begin{align*}
\frac{f(tx)}{f(t)}\rightarrow 1,\quad x\in\mathbb{R},
\end{align*}
as $t\rightarrow\infty$. In particular, functions asymptotic to a constant are slowly varying, hence we can write the probability $\mathbb{P}(\lvert\Gamma C(x)\rvert=n)$ as product of $n^{-(1+\alpha)}$ with a slowly varying function.

The next lemma is derived from \cite{Doney1989} and describes the local asymptotic behavior of sums of iid heavy-tailed random variables. 
\begin{lemma}
\label{prelim:lem:largedevi}
Let $(X_i)_{i\in\mathbb{N}}$ be iid integer valued random variables distributed like $X$ and set $S_N:=\sum_{i\leq N}X_i$. Assume that
\begin{itemize}
\item $\mathbb{E}[X]=\mu$ is finite and 
\item $\mathbb{P}(X=n)=n^{-(1+\alpha)}L(n)$ for some $1<\alpha<\infty$ and a slowly varying function $L$.
\end{itemize}
Then for any $\varepsilon>0$ and uniformly in $s\geq(\mu+\varepsilon)N$  
\begin{align*}
\mathbb{P}(S_N=s)\sim N\mathbb{P}(X=s-N\mu).
\end{align*}
\end{lemma}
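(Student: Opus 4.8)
The plan is to establish this estimate via the \emph{single big jump} principle, which underlies all heavy-tailed local large deviation results of this type and is the probabilistic content of the cited theorem of Doney. Write $f(k):=\mathbb{P}(X=k)$ and put $m:=s-\mu N$; the hypothesis $s\ge(\mu+\varepsilon)N$ guarantees $m\ge\varepsilon N\to\infty$. Since $\mathbb{P}(X=n)=n^{-(1+\alpha)}L(n)$, summation (Karamata) gives a regularly varying tail $\mathbb{P}(X>x)\sim\tfrac1\alpha x^{-\alpha}L(x)$ of index $-\alpha$, so $X$ is subexponential. The heuristic is that, once the target $s$ exceeds the mean $\mu N$ by an amount linear in $N$, the cheapest way to realise $\{S_N=s\}$ is for a \emph{single} summand to take the anomalously large value $\approx m$ while the remaining $N-1$ variables behave typically and contribute $\approx\mu(N-1)$. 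The factor $N$ then counts the choice of the exceptional index, and the cost of the jump is $f\bigl(s-\mu(N-1)\bigr)\sim f(m)$. I would prove the claim by matching a lower and an upper bound, both uniform over $s\ge(\mu+\varepsilon)N$, and finally replace the centering $s-\mu(N-1)$ by $s-\mu N$ using that $f$ is regularly varying, so that $f(m+\mu)\sim f(m)$.

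For the lower bound I would single out the large summand. Let $t_N$ be a truncation level with $w_N\ll t_N\ll m$, where $w_N=o(N)$ denotes the fluctuation scale of $S_{N-1}:=X_2+\dots+X_N$ about its mean (such a scale exists for every $\alpha>1$: by regular variation $X$ lies in the domain of attraction of a stable law, normal when $\alpha\ge2$, cf.\ Lemma~\ref{prelim:lem:janson}). The events ``$X_j\ge t_N$ while $X_i<t_N$ for all $i\neq j$'' are disjoint and equiprobable, so
\begin{align*}
\mathbb{P}(S_N=s)\ge N\sum_{r}\mathbb{P}(S_{N-1}=r,\ \text{each}<t_N)\,f(s-r),
\end{align*}
and I would restrict $r$ to the window $|r-\mu(N-1)|\le\eta w_N$ with $\eta$ large but fixed. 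On this window $s-r=m+O(w_N)=m(1+o(1))$, so regular variation of $f$ together with the uniform convergence theorem for slowly varying functions gives $f(s-r)\sim f(m)$ uniformly, while the central limit theorem shows the window carries mass tending to a constant approaching $1$ as $\eta\to\infty$. Letting $N\to\infty$ and then $\eta\to\infty$ yields $\mathbb{P}(S_N=s)\ge(1-o(1))\,N f(m)$.

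The upper bound is where the real work lies, and I expect it to be the main obstacle. I would decompose $\{S_N=s\}$ according to the number of summands exceeding $t_N$. Configurations with at least two large summands contribute $O\bigl((N\,\mathbb{P}(X\ge t_N))^2\bigr)$, which is $o(Nf(m))$ precisely because $\alpha>1$ makes two simultaneous jumps strictly more expensive than one. The exactly-one-large-summand term is bounded, after truncating the other $N-1$ variables, by the same convolution sum as above and hence by $(1+o(1))Nf(m)$. The delicate term is the \emph{no large jump} event, on which $S_N$ is a sum of variables each below $t_N$ yet still exceeds its mean by the linear amount $m$; here I would apply a Cram\'er/Fuk--Nagaev exponential bound to the truncated walk to show this probability is negligible against $Nf(m)$. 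Combining the three contributions gives $\mathbb{P}(S_N=s)\le(1+o(1))\,Nf(m)$, matching the lower bound.

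The principal difficulty throughout is \emph{uniformity}: the threshold $t_N$, the window scale $w_N$, and the truncated exponential estimates must be controlled simultaneously across the whole range $s\ge(\mu+\varepsilon)N$, which includes both the moderate regime $s=\Theta(N)$ and the far tail $s\gg N$, where the relative sizes of the bulk and the jump differ qualitatively. These uniform heavy-tailed large deviation estimates are exactly what the local limit theorem of \cite{Doney1989} supplies. Accordingly, the most economical route is to verify its hypotheses --- integer lattice with maximal span, finite mean $\mu$, and local probabilities regularly varying with index $-(1+\alpha)$, $\alpha>1$ --- read off the conclusion in the region $s\ge(\mu+\varepsilon)N$, and then pass from the centering in the cited statement to $s-\mu N$ by the slow-variation argument noted above.
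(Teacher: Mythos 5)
The paper offers no proof of this lemma beyond the citation to Doney (1989), and your closing paragraph --- verify that theorem's hypotheses (integer lattice, finite mean $\mu$, local probabilities regularly varying with index $-(1+\alpha)$, $\alpha>1$) and read off the conclusion uniformly on $s\ge(\mu+\varepsilon)N$, adjusting the centering by slow variation --- is exactly that route, so your proposal takes essentially the same approach as the paper. Your single-big-jump sketch is the standard argument underlying Doney's result and is sound in outline, though if you wanted it self-contained you would need to replace the global bound $O\bigl((N\,\mathbb{P}(X\ge t_N))^2\bigr)$ for the two-jump term by a local estimate on $\{S_N=s\}$, since for $s=\Theta(N)$ no admissible $t_N\ll m$ makes the global bound $o(Nf(m))$; none of this is needed once Doney's theorem is invoked.
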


\section{Proof of Main Result}
	\label{sec:proof}
	We first prove the auxiliary Lemma \ref{main:lem:uniquesolution}.
\begin{proof}[Lemma \ref{main:lem:uniquesolution}]
$x C'(x)/C(x)$ is monotone increasing, as the coefficients of $C$ are non-negative. Further, as $\rho>0$ is the radius of convergence of $C(x)$, we have that $x C'(x)/C(x)\leq \rho C'(\rho)/C(\rho)$ for any $x$ such that $C(x)$ is finite. The assumption that $|\mathcal{C}_1|>0$ yields that $x C'(x)/C(x)\rightarrow 1$ as $x$ tends to $0$. The claim follows. 
\end{proof} \hfill$\square$

\begin{proof}[Theorems \ref{main:bigthm:1}, \ref{main:bigthm:2} and \ref{main:bigthm:3}] Let $\gamma(x)$ and $(\gamma_i(x))_{i\in\mathbb{N}}$ be iid copies of $\Gamma C(x)$.\ Due to Lemma \ref{prelim:lem:sumiid}, it suffices to compute the probability 
$$\mathbb{P}\Big(\sum_{i\leq N}\lvert\gamma_i(x)\rvert=n\Big), \quad \text{where}\quad N := \floor*{\lambda n},$$
that exposes a variety of asymptotic behaviors depending on the actual choice of $x$ and the parameters $\lambda$ and $\alpha$.
The assumptions on $\lvert\mathcal{C}_n\rvert$ yield that $C(x)$ and $xC'(x)$ are finite for all $x\in(0, \rho)$, as for any $\alpha>1$ 
\begin{align*}
x^n[x^n]C(x)=\lvert\mathcal{C}_n\rvert \frac{x^n}{n!}\sim b\left( \frac{x}{\rho}\right)^n n^{-(1+\alpha)}
\end{align*}
and
\begin{align*}
x^n[x^n]xC'(x)=n\lvert\mathcal{C}_n\rvert\frac{x^n}{n!}\sim b\left( \frac{x}{\rho}\right)^nn^{-\alpha}
\end{align*}
induce convergent sums. If $\alpha\in(1, 2]$ then $x^2C''(x)$ is only finite for $x\in(0,\rho)$ due to the asymptotic behavior of
\begin{align*}
x^n[x^n]x^2C''(x)\sim n^2 \lvert\mathcal{C}_n\rvert\frac{x^n}{n!}\sim b\left( \frac{x}{\rho}\right)^n n^{1-\alpha}.
\end{align*}
For $\alpha>2$ we always have that $\rho^{2}C''(\rho)<\infty$. From \eqref{prelim:eq:mean} we readily obtain that
\begin{align}
\label{proof:finitemean}
\mathbb{E}[\lvert\Gamma C(x)\rvert]<\infty,\;&x\in(0,\rho],\alpha>1, \\
\label{proof:finitevariance}
\mathrm{Var}(\lvert\Gamma C(x)\rvert)<\infty,\;& 
\begin{cases}
x\in(0,\rho), \alpha\in(1,2] \\
x\in(0,\rho], \alpha>2
\end{cases}\!\!, \\
\label{proof:infinitevariance}
\mathrm{Var}(\lvert\Gamma C(\rho)\rvert)=\infty,\;& \alpha\in(1,2].
\end{align}
By applying Lemma \ref{main:lem:uniquesolution} we infer that
\begin{align}
\label{proof:uniquex}
\frac{xC'(x)}{C(x)}=\frac{1}{\lambda}
\end{align}
has a unique solution for every $\lambda\in[\lambda^{*},1)$, where
\begin{align*}
\lambda^{*}:=\frac{C(\rho)}{\rho C'(\rho)}.
\end{align*}
For other $\lambda\in(0,\lambda^{*})$ there is no solution to \eqref{proof:uniquex}. A straightforward computation shows
\begin{align}
\label{proof:gammaequaln}
\mathbb{P}(\lvert\Gamma C(\rho)\rvert=n)\sim\frac{b}{C(\rho)}n^{-(1+\alpha)}
\end{align}
and
\begin{align}
\label{proof:gammagreatern}
\mathbb{P}(\lvert\Gamma C(\rho)\rvert> n)\sim \frac{b}{\alpha C(\rho)}n^{-\alpha}.
\end{align}
Note that Equation \eqref{prelim:eq:gnasprob} holds for any $x$ such that $G(x)$ and hence $C(x)$ is finite. We distinguish three cases, each one requiring a different choice for $x$.

$\boldsymbol{1^{st}\text{ }case}$: Assume that $\lambda\in(0,\lambda^{*})$ and $\alpha>1$. Set $x=\rho$ and note that $\mathbb{E}[\lvert\gamma(\rho)\rvert]=1/\lambda^{*}$. Due to \eqref{proof:finitemean} and \eqref{proof:gammaequaln} the first two conditions of Lemma \ref{prelim:lem:largedevi} are fulfilled. In order apply Lemma \ref{prelim:lem:largedevi} it remains to show that there is an $\varepsilon > 0$ such that $n \ge (1/\lambda^* +  \varepsilon)N$. Indeed, let $\xi$ be such that $N = \lambda n + \xi$. Then $|\xi| \in [0,1]$ and for sufficiently large $n$
\[
	\frac{n}N =  \frac{n}{\lambda n + \xi} = \frac1\lambda + o(1) > \frac1{\lambda^*},
\]
so that Lemma \ref{prelim:lem:largedevi} can be applied. We obtain
\begin{align*}
\mathbb{P}\left(\sum\limits_{i\leq N}\lvert\gamma_i(\rho)\rvert=n\right)\sim \frac{b\lambda}{C(\rho)(1-\lambda/\lambda^{*})^{1+\alpha}}n^{-\alpha}.	
\end{align*}
With Lemma \ref{prelim:lem:sumiid} we then immediately obtain the claimed value for $\lvert\mathcal{G}_{n,N}\rvert$.

$\boldsymbol{2^{nd}\text{ }case}$: Consider $\lambda=\lambda^{*}$.
Let $x=\rho$ and assume that $\alpha\in(1, 2)$. Equation \eqref{proof:gammagreatern} shows that Lemma \ref{prelim:lem:janson} is applicable to $\sum_{i\leq N}\lvert\gamma_i(\rho)\rvert$ with $a_N=N/\lambda^{*}$ and $b_N=N^{1/\alpha}$. It holds that $a_{N}+\mathcal{O}(1)=n$ and $b_{N}\sim (\lambda^{*} n)^{1/\alpha}$. Thus, noting that $\lvert\Gamma C(\rho)\rvert$ is taking values in the lattice $\mathbb{N}$ as $\mathcal{C}_1$ and $\mathcal{C}_n$ are non-empty for sufficiently large $n$, Lemma \ref{prelim:lem:generallocallimit} gives us
\begin{align*}
&\mathbb{P}\left(\sum\limits_{i\leq N}\lvert\gamma_i(\rho)\rvert=n\right) \sim \left(\frac{\alpha C(\rho)}{\lambda^{*}b\lvert\Gamma(1-\alpha)\rvert}\right)^{1/\alpha}\frac{1}{\lvert\Gamma(-1/\alpha)\rvert}n^{-1/\alpha}.
\end{align*}
This proves the case $\lambda=\lambda^{*}$ of Theorem \ref{main:bigthm:1}.

If $\alpha=2$, we use the second part of Lemma \ref{prelim:lem:janson} to obtain 
\begin{align*}
\frac{\sum_{i\leq  N}\lvert\gamma_i(\rho)\rvert-\frac{N}{\lambda^{*}}}{\sqrt{N\log N}}\overset{(\mathrm{d})}{\rightarrow}\mathcal{N}\left(0,\frac{b}{2C(\rho)}\right).
\end{align*}
Again, by Lemma \ref{prelim:lem:generallocallimit}, and using that $N\log N\sim \lambda^{*} n\log(\lambda^{*}n)$.
\begin{align*}
\mathbb{P}\left(\sum\limits_{i\leq N}\lvert\gamma_i(\rho)\rvert=n\right)\sim \sqrt{\frac{C(\rho)}{b\lambda^{*}\pi}}(n\log \lambda^{*} n)^{-1/2}, 
\end{align*}
which gives us the terms $c_2$ and $(n\log(\lambda^{*} n))^{-1/2}$ in Theorem \ref{main:bigthm:2}. 

For the case $\alpha>2$ in Theorem \ref{main:bigthm:2} the variance $\sigma^2$ of $\lvert\Gamma C(\rho)\rvert$ is finite due to \eqref{proof:finitevariance}. As $\mathcal{C}_n$ is non-empty for sufficiently large $n$, there exists $q>0$ such that $q_i=q$ in Lemma \ref{prelim:lem:classlocal} for all $i\in\mathbb{N}$ and consequently $\sup_{k\leq N}\sigma^2N/\sum_{i\leq N}q_i=\sigma^2/q<\infty$. Hence, we can use Lemma \ref{prelim:lem:classlocal} with $a_N=N/\lambda^{*}$ and $b_N=\sigma N^{1/2}$, which leads to
\begin{align*}
\mathbb{P}\left(\sum\limits_{i\leq N}\lvert\gamma_i(\rho)\rvert=n\right)\sim\frac{1}{\sqrt{2\pi\sigma^2\lambda^{*}}}n^{-1/2}.
\end{align*}

$\boldsymbol{3^{rd}\text{ }case}$: The last case $\lambda\in(\lambda^{*},1)$ is treated similarly to $\alpha>2$ in the second case. Let $x_\lambda$ solve \eqref{proof:uniquex} and consider $\alpha>1$. The restriction $\lambda\in(\lambda^{*},1)$ guarantees that $x_\lambda\in(0,\rho)$. According to Equations \eqref{proof:finitemean} and \eqref{proof:finitevariance} it holds that $\lvert\Gamma C(x_\lambda)\rvert$ has finite mean and finite variance $\sigma_\lambda^2$. Following the arguments of $\alpha>2$ in the second case, we deduce
\begin{align*}
\mathbb{P}\left(\sum\limits_{i\leq N}\lvert\gamma_i(\rho)\rvert=n\right)\sim\frac{1}{\sqrt{2\pi\sigma_\lambda^2\lambda}}n^{-1/2}.
\end{align*}
According to the latter equation, we immediately obtain $c_{+}(\lambda)$.
To compute $\sigma_\lambda^2$, simply combine $\mathbb{E}[\lvert\Gamma C(x_\lambda)\rvert]=1/\lambda$ with \eqref{prelim:eq:mean}.
\end{proof} \hfill$\square$

\section{Examples}
	\label{sec:exa}
	In this section we shall demonstrate that the main result is applicable to so-called block-stable graph classes. Such classes are characterized as follows. Let $\mathcal{G}$ denote a subspecies of the species of graphs, $\mathcal{C}$ the subspecies of connected graphs in $\mathcal{G}$ and $\mathcal{B}$ the subspecies containing all 2-connected graphs in $\mathcal{C}$ or only two vertices joined by an edge. The blocks of $G$ are the maximal connected sub-graphs in $G$ not containing a cutvertex (of themselves). We call $\mathcal{G}$ or $\mathcal{C}$ block-stable, if $\mathcal{B}$ is non-empty and $G\in\mathcal{G}$ if and only if every block of $G$ is in $\mathcal{B}$ or an isolated vertex.  A more detailed view on block-stable graph classes can be found in \cite[Chapter 4.2]{Leroux1998}, where it is proven in Proposition 2 that 
\begin{align}
\label{examples:blockdecomp}
xC'(x)=x\exp(B'(xC'(x)))
\end{align}
if $\mathcal{C}$ is block-stable. An immediate consequence of this equation is
\begin{align}
\label{examples:blockdecomp2}
C(x)=xC'(x)-xC'(x)B'(xC'(x))+B(xC'(x)).
\end{align}
According to \eqref{proof:uniquex} we will need to investigate the equation $xC'(x)/C(x)=1/\lambda$, which is equivalent to 
\begin{align}
\label{examples:solvelambda}
1-B'(xC'(x))+\frac{B(xC'(x))}{xC'(x)}=\lambda.
\end{align}
Indeed, the latter equation yields a simplification as the the egf $B(x)$ is known explicitly in many cases and consequently for given $\lambda$ it is possible to compute $xC'(x)$ analytically. Note that Equation \eqref{examples:solvelambda} allows for a unique solution for certain values of $\lambda$, compare to Lemma \ref{main:lem:uniquesolution}.

Let $\rho$ and $R$ denote the radii of convergence for $C(x)$ and $B(x)$ and set $\zeta:=\rho C'(\rho)$. If a block-stable class of connected graphs $\mathcal{C}$ additionally satisfies that $\zeta\in(0,R)$, then $\mathcal{C}$ is termed \textit{subcritical}. Equivalently, it is shown in \cite{panagiotou2016} (Lemma 3.8) that $\mathcal{C}$ is subcritical if and only if $RB''(R)>1$. In this case $\zeta B''(\zeta)=1$. Combining this with Equation \eqref{examples:blockdecomp}, we deduce
\begin{align}
\label{examples:eq:rho}
\rho = \zeta \exp (-B'(\zeta))
\end{align}
and according to Equation \eqref{examples:solvelambda} the value $\lambda^{*}$ in Equation \eqref{main:eq:lambdastar} at which the phase transition appears is computed as
\begin{align}
\label{examples:eq:lambdastar}
\lambda^{*}=1-B'(\zeta)+\frac{B(\zeta)}{\zeta}.
\end{align}
By \cite[Corollary 3.9]{panagiotou2016} the quantity $\lvert\mathcal{C}_n\rvert$ is given by
\begin{align}
\label{examples:eq:C_n}
\lvert\mathcal{C}_n\rvert\sim \frac{\zeta}{\sqrt{2\pi(1+\zeta^2B'''(\zeta))}}n^{-5/2}\rho^{-n}n!.
\end{align}
Hence, the assumptions of Theorem \ref{main:bigthm:1} are fulfilled for any subcritical class of block-stable graphs.
 
\begin{theorem}\label{examples:thm:subcrit}
Let $\mathcal{C}$ denote a subcritical class of block-stable graphs. Then $\alpha=3/2
$ and the constants in Theorem \ref{main:bigthm:1} are as depicted in Figure \ref{examples:fig:recipe}.
\end{theorem}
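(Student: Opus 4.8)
The plan is to obtain Theorem~\ref{examples:thm:subcrit} as a direct specialization of Theorem~\ref{main:bigthm:1}: first read off the parameters $\alpha$ and $b$ from the block-stable enumeration formula \eqref{examples:eq:C_n}, verify that $\alpha$ falls into the regime covered by Theorem~\ref{main:bigthm:1}, and then rewrite each of the constants $c_{-}(\lambda)$, $c$ and $c_{+}(\lambda)$ in terms of $\zeta$, the egf $B$ and its derivatives. Comparing \eqref{examples:eq:C_n} with the standing assumption \eqref{eq:heavyAssumption}, matching the polynomial exponents forces $1+\alpha=5/2$, hence $\alpha=3/2$, and matching the prefactors yields $b=\zeta\bigl(2\pi(1+\zeta^2B'''(\zeta))\bigr)^{-1/2}$. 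Since $3/2\in(1,2)$, Theorem~\ref{main:bigthm:1} applies verbatim and only its constants remain to be evaluated.

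The key simplification that makes the three regimes explicit comes from the block decomposition \eqref{examples:blockdecomp2}. Evaluating it at $x=\rho$, where $\rho C'(\rho)=\zeta$, and comparing with \eqref{examples:eq:lambdastar}, I would obtain the clean identity $C(\rho)=\zeta-\zeta B'(\zeta)+B(\zeta)=\zeta\lambda^{*}$. Substituting this together with $b$ into $c_{-}(\lambda)$ settles the subcritical regime $\lambda\in(0,\lambda^{*})$. For the critical value $\lambda=\lambda^{*}$ I would additionally insert $\alpha=3/2$, using the Gamma-function values $\Gamma(1-\alpha)=\Gamma(-1/2)=-2\sqrt{\pi}$ and $\Gamma(-1/\alpha)=\Gamma(-2/3)$, after which $c=\bigl(3\zeta/(4\sqrt{\pi}\,b)\bigr)^{2/3}/\lvert\Gamma(-2/3)\rvert$ falls out, matching the corresponding entry of Figure~\ref{examples:fig:recipe}.

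For the supercritical regime $\lambda\in(\lambda^{*},1)$ the only nontrivial ingredient is $\sigma_\lambda^2$ from \eqref{main:eq:sigma}, which requires the quantity $x_\lambda^2C''(x_\lambda)$; this is the part I expect to be the main obstacle, since $C''$ is not directly available but only implicitly through the block relation. I would handle it by implicit differentiation of \eqref{examples:blockdecomp}: writing $u:=xC'(x)$, the relation $u=x\,\mathrm{e}^{B'(u)}$ gives $u'\bigl(1-uB''(u)\bigr)=u/x$, and since $x^2C''(x)=xu'-u$ this produces the closed form $x^2C''(x)=u^2B''(u)/(1-uB''(u))$. Combining this with $C(x_\lambda)=u\lambda$ (again from \eqref{examples:blockdecomp2} and \eqref{examples:solvelambda}, where $u=x_\lambda C'(x_\lambda)$ is the value determined by \eqref{examples:solvelambda}) expresses $\sigma_\lambda^2$, and hence $c_{+}(\lambda)$, entirely through $B$ and $u$, which is precisely the remaining content of Figure~\ref{examples:fig:recipe}.
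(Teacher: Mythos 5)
Your proposal is correct and follows essentially the same route as the paper: read $\alpha=3/2$ and $b$ off \eqref{examples:eq:C_n}, use the block-decomposition identities \eqref{examples:blockdecomp}--\eqref{examples:solvelambda} to express $C(\rho)$, $\lambda^{*}$, $x_\lambda$ and $C(x_\lambda)$ through $\zeta$ resp.\ $y_\lambda$, and obtain $\sigma_\lambda^2$ by implicitly differentiating \eqref{examples:blockdecomp}. Your closed form $x^2C''(x)=u^2B''(u)/(1-uB''(u))$ is algebraically equivalent to the paper's computation via $x(xC'(x))'=x^2C''(x)+xC'(x)$ and simplifies to the same expression $\sigma_\lambda^2=\frac{1}{\lambda(1-y_\lambda B''(y_\lambda))}-\frac{1}{\lambda^2}$ in Figure \ref{examples:fig:recipe}.
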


\begin{figure}[h]
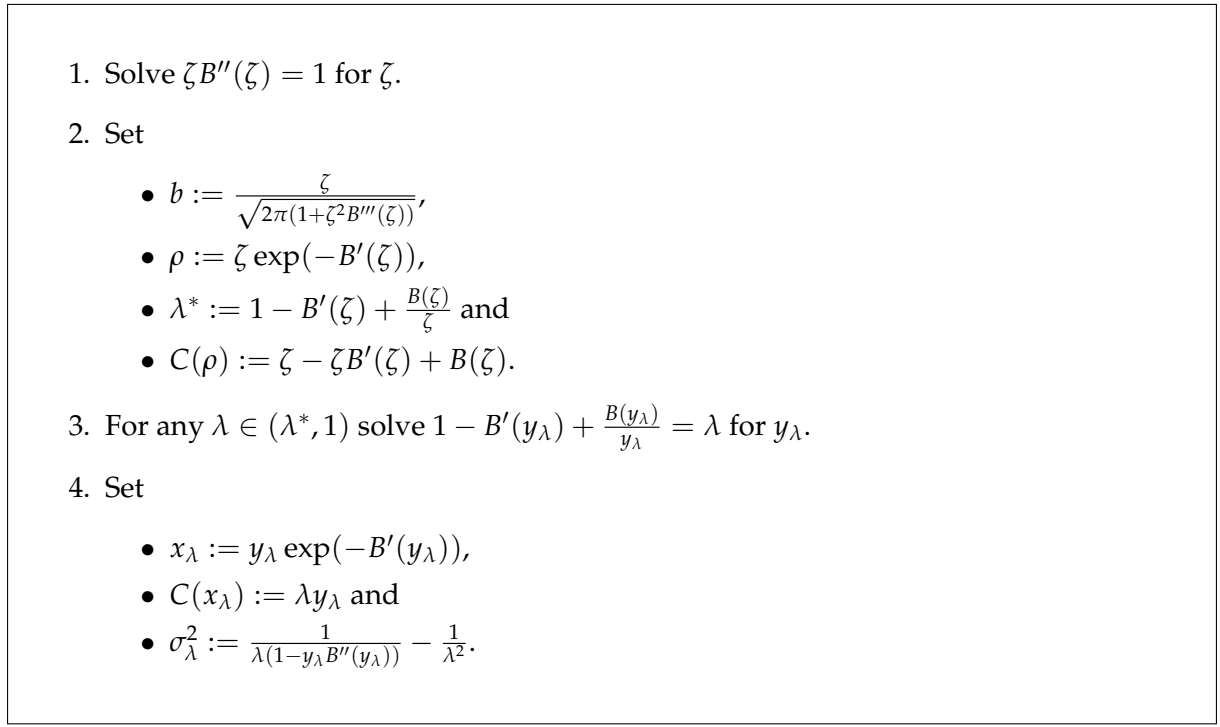
\caption{Constants of Theorem \ref{main:bigthm:1}}\label{examples:fig:recipe}\begin{framed}\begin{enumerate}
\item Solve $\zeta B''(\zeta)=1$ for $\zeta$.
\item Set 
	\begin{itemize}
	\item $b:=\frac{\zeta}{\sqrt{2\pi(1+\zeta^2B'''(\zeta))}}$,
	\item $\rho:=\zeta\exp(-B'(\zeta))$,
	\item $\lambda^{*}:=1-B'(\zeta)+\frac{B(\zeta)}{\zeta}$ and
	\item $C(\rho):=\zeta-\zeta B'(\zeta)+B(\zeta)$.
	\end{itemize}
\item For any $\lambda\in(\lambda^{*},1)$ solve $1-B'(y_\lambda)+\frac{B(y_\lambda)}{y_\lambda}=\lambda$ for $y_\lambda$.
\item Set
	\begin{itemize}
	\item $x_\lambda:=y_\lambda\exp(-B'(y_\lambda))$,
	\item $C(x_\lambda):=\lambda y_\lambda$ and
	\item $\sigma_\lambda^2:=\frac{1}{\lambda (1-y_\lambda B''(y_\lambda))}-\frac{1}{\lambda^2}$.
	\end{itemize}
\end{enumerate}\end{framed}\end{figure}

Whereas the constants in Step $2$ of Figure \ref{examples:fig:recipe} result from the fact that we set $\zeta=\rho C'(\rho)$ combined with Equations \eqref{examples:eq:rho}, \eqref{examples:eq:lambdastar} and ${C(\rho)}/{\rho C'(\rho)}={1}/{\lambda^{*}}$, Step $3$ is more involved; by differentiating \eqref{examples:blockdecomp} and performing algebraic manipulations, we first observe
\begin{align*}
(xC'(x))'=\frac{xC'(x)}{x(1-xC'(x)B''(xC'(x)))}.
\end{align*}
Further
$
{x(xC'(x))'}= {x^2C''(x)+xC'(x)}
$
and hence we obtain for $\lambda\in(\lambda^{*},1)$ in Equation \eqref{main:eq:sigma}
\begin{align*}
\sigma_\lambda^2=\frac{x_\lambda(x_\lambda C'(x_\lambda))'}{C(x_\lambda)}-\frac{1}{\lambda^2},
\end{align*}
where $x_\lambda$ denotes the unique solution to \eqref{main:eq:uniquesolution}. The switch of variables $x_\lambda C(x_\lambda)=y_\lambda$ yields the identities in Step 4 of Figure \ref{examples:fig:recipe} by applying \eqref{examples:blockdecomp}.

In the following, we apply Theorem \ref{examples:thm:subcrit} to the species of trees $\mathcal{T}$, cacti graphs $\mathcal{C}$ and Husimi trees $\mathcal{H}$. In Figure \ref{examples:fig:ch} the functions appearing in Theorem \ref{main:bigthm:1} are plotted for the respective species.

\subsection*{Forests of Trees}
It is well known that $\mathcal{T}$ is block-stable and $\mathcal{B}$ is the subspecies of edges, i.e. $B(x)=x^2/2$. Since $xB''(x)\rightarrow\infty$ as $x$ tends to infinity, we see that $\mathcal{T}$ is subcritical. Hence, we follow the computations in Figure \ref{examples:fig:recipe} to show the validity of Theorem \ref{main:thm:forests}.

\subsection*{Forests of Cacti Graphs}
Objects in $\mathcal{C}$ are connected graphs in which any two cycles have at most one vertex in common. Thus, $\mathcal{B}$ is the subspecies of cycles or pairs of vertices joined by an edge. This readily yields the egf\ of $\mathcal{B}$ 
\begin{align*}
B(x)=\frac{x^2}{4}-\frac{x}{2}-\frac{\log(1-x)}{2},
\end{align*}
cf.\ \cite{panagiotou2016}. Further, $\mathcal{C}$ is subcritical as $xB''(x)\rightarrow\infty$ as $x$ tends to $1$. We develop the constants of Theorem \ref{main:bigthm:1} step by step as in Figure \ref{examples:fig:recipe}. Let $\zeta\approx0.45631$ denote the unique solution to 
\begin{align*}
\zeta+\frac{(2-\zeta)\zeta^2}{2(1-\zeta)^2}=1.
\end{align*}
Then
\begin{itemize}
\item $b={\zeta}\,\big({2\pi(1+\zeta^2/(1-\zeta)^3)}\big)^{-1/2}\approx 0.12014$,
\item $\rho=\zeta\exp\left\{-\zeta-{\zeta^2}/{2(1-\zeta)}\right\}\approx0.23874$,
\item $\lambda^{*}={1}/{2}- {3\zeta}/{4}- {\zeta^2}/{2(1-\zeta)}-{\log(1-\zeta)}/{2\zeta}\approx0.63400$ and
\item $C(\rho)=\zeta\lambda^{*}\approx0.28930$.
\end{itemize}
For $\lambda\in(\lambda^{*},1)$ let $y_\lambda$ be the unique solution to 
\begin{align*}
\frac{1}{2}-\frac{3y_\lambda}{4}-\frac{y_\lambda^2}{2(1-y_\lambda)}-\frac{\log(1-y_\lambda)}{2y_\lambda}=\lambda.
\end{align*}
Again, we determine the constants in dependence of $y_\lambda$, namely
\begin{itemize}
\item $x_\lambda=y_\lambda\exp\left\{-y_\lambda-\frac{y_\lambda^2}{2(1-y_\lambda)}\right\}$,
\item $C(x_\lambda)=\lambda y_\lambda$ and
\item $\sigma_\lambda^2=-\frac{2(1-y_\lambda)^2}{\lambda(y_\lambda^3-4y_\lambda^2+6y_\lambda-2)}-\frac{1}{\lambda^2}$.
\end{itemize}

\begin{figure}
	\label{examples:fig:ch}
	\begin{center}
		\includegraphics[scale=0.4]{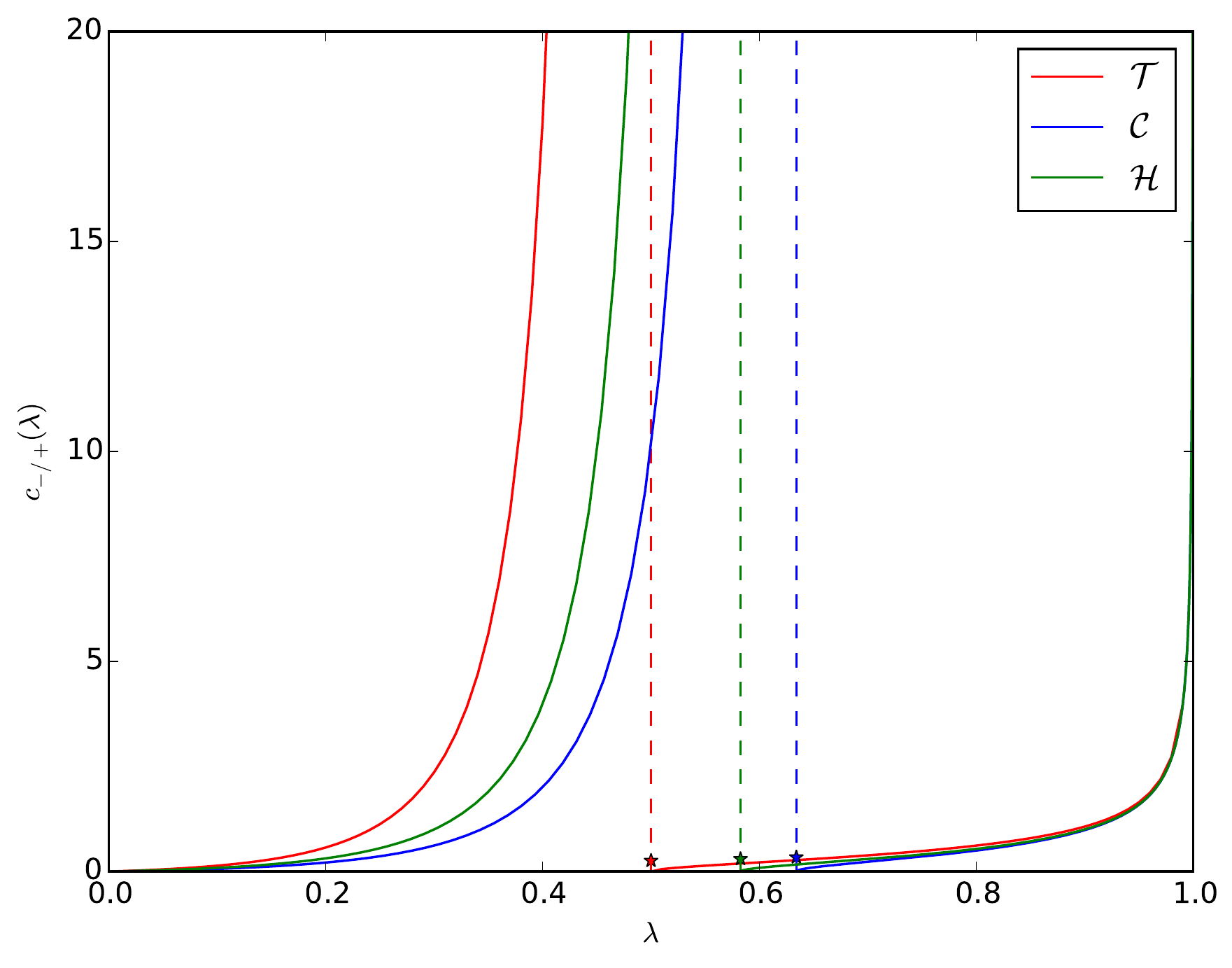}
		\caption{The functions $c_{-/+}$ (solid line) and the constant $c$ (star) from Theorem \ref{main:bigthm:1} for the species of trees $\mathcal{T}$ (red), cacti graphs $\mathcal{C}$ (green) and Husimi trees $\mathcal{H}$ (blue). The respective dotted lines represent the critical point $\lambda^{*}$.}	
	\end{center}
\end{figure}

\subsection*{Forests of Husimi Trees}
For $\mathcal{H}$ the subspecies of 2-connected graphs $\mathcal{B}$ is the species of complete graphs, where $\lvert\mathcal{B}_n\rvert=1$ for all $n\geq 2$. It may easily be verified that the egf
\begin{align*}
B(x)=\mathrm{e}^x-x-1
\end{align*}
is analytic and $x\mathrm{e}^{x}\rightarrow\infty$ as $x$ tends to infinity, i.e. $\mathcal{H}$ is subcritical. Again, we adhere to the steps given in Figure \ref{examples:fig:recipe} in order to determine the constants in Theorem \ref{main:bigthm:1}. Let $\zeta\approx0.56714$ be the solution to
$
\zeta\mathrm{e}^\zeta=1.
$ We compute
\begin{itemize}
\item $b={\zeta} \, \big(2\pi(1+\zeta^2\mathrm{e}^{\zeta})\big)^{-1/2}\approx 0.18073$,
\item $\rho=\zeta\exp\left\{-\mathrm{e}^{\zeta}+1\right\}\approx0.26438$,
\item $\lambda^{*}=1-{1}/{\zeta}+{\mathrm{e}^{\zeta}}/{\zeta}-\mathrm{e}^{\zeta}\approx0.58250$ and
\item $H(\rho)=\zeta-1+\mathrm{e}^{\zeta}-\zeta\mathrm{e}^{\zeta}\approx0.33036$.
\end{itemize}
For the constants in dependence of $y_\lambda$, as the unique solution to
\begin{align*}
1-\frac{1}{y_\lambda}+\frac{\mathrm{e}^{y_\lambda}}{y_\lambda}-\mathrm{e}^{y_\lambda}=\lambda
\end{align*}
for $\lambda\in(\lambda^{*},1)$, we find
\begin{itemize}
\item $x_\lambda=y_\lambda\exp\left\{-\mathrm{e}^{y_\lambda}+1\right\}$,
\item $H(x_\lambda)=\lambda y_\lambda$ and
\item $\sigma_\lambda^2=\frac{1}{\lambda\left(1-y_\lambda\mathrm{e}^{y_\lambda}\right)}-\frac{1}{\lambda^2}$.
\end{itemize}

\bibliographystyle{acm}
\bibliography{references}

\end{document}